\newtheorem{thm}{Theorem}[section]
\newtheorem{rem}[thm]{Remark}
\newtheorem{defi}[thm]{Definition}
\newtheorem{lem}[thm]{Lemma}
\newtheorem{cor}[thm]{Corollary}
\newtheorem{note}{Notation}
\def\NZQ{\Bbb}
\def\NN{{\NZQ N}}
\def\RR{{\NZQ R}}
\def\CC{{\NZQ C}}
\def\PP{{\NZQ P}}
\def\SS{{\NZQ S}}
\def\TT{{\NZQ T}}
\def\ml{\mathcal{C}}
\def\ml1{\mathcal{C}^1}
\def\mlb1{\mathcal{C}_{b}^{1}}
\def\frk{\frak}
\def\Phi{{\frk n}}
\def\rank{{\rm rank}}
\def\A{{\mathcal A}}
\def\B{{\mathcal B}}
\newcommand{\C}[0]{\mathbb{C}}
\newcommand{\bd}[1]{\textbf{#1}}
\newcommand{\codim}[0]{\mbox{codim }}
\title{Pappus's Theorem in Grassmannian $Gr(3, \CC^n)$}
\author{S. Sawada}
\author{S. Settepanella}
 \author{S. Yamagata}
\address{%
Department of Mathematics,
Hokkaido University, Japan.}
\email{b.lemon329@gmail.com}
\email{s.settepanella@math.sci.hokudai.ac.jp}
\email{so.yamagata.math@gmail.com}
\thanks{The second named author was supported by JSPS Kakenhi Grant Number 26610001.}
\subjclass{ 52C35 05B35 14M15}
\keywords{Discriminantal arrangements, Intersection lattice, Grassmannian, Pappus' Theorem}
\begin{document}

\maketitle


\begin{abstract}
In this paper we study intersections of quadrics, components of the hypersurface in Grassmannian $Gr(3, \CC^n)$ introduced in \cite{SoSuSi}. This lead to an alternative statement and proof of Pappus's Theorem retrieving Pappus's and Hesse configurations of lines as special points in complex projective Grassmannian. This new connection is obtained through a third purely combinatorial object, the intersection lattice of Discriminantal arrangement.
\end{abstract}


\section{Introduction}
Pappus's hexagon Theorem, proved by Pappus of Alexandria in the fourth century A.D., began a long development in algebraic geometry. \begin{center}
\textit{In its changing expressions one can see reflected the changing concerns of the field, from synthetic geometry to projective plane curves to Riemann surfaces to the modern development of schemes and duality (D. Eisenbud, M. Green and J. Harris \cite{EGH}).}
\end{center}
There are several knowns proofs of Pappus's Theorem including its generalizations such as Cayley Bacharach Theorem ( see Chapter $1$ of \cite{Geb} for a collection of proofs of Pappus's Theorem and \cite{EGH} for proofs and conjectures in higher dimension).\\
In this paper, by mean of recent results in \cite{sette} and \cite{SoSuSi}, we connect Pappus's hexagon configuration  
to intersections of well defined quadrics in Grassmannian providing a new statement and proof of Pappus's Theorem as an original result on dependency conditions for defining polynomials of those quadrics. This result enlightens a new connection between special configurations of points ( lines ) in projective plane and hypersurfaces in projective Grassmannain $Gr(3, \CC^n)$.
This connection is made through a third combinatorial object, the intersection lattice of the {\it  Discriminantal arrangement}. Introduced by Manin and Schechtman in 1989, it is an arrangement of hyperplanes generalizing classical braid arrangement (cf. \cite{man} p.209). Fixed a generic arrangement $\A = \{ H^0_1,...,H^0_n\}$ in $\CC^k$,
Discriminantal arrangement $\B(n, k, \A), n, k \in \NN$ for $k \ge 2$ ( $k=1$ corresponds to Braid arrangement ), consists of 
parallel translates $H_1^{t_1},...,H_n^{t_n}, (t_1,...,t_n) \in \CC^n$, of $\A$ which fail to form a generic arrangement in $\CC^k$. The combinatorics of $\B(n, k, \A)$ is known in the case of \textit{very generic arrangements}, i.e. $\A$ belongs to an open Zariski set $\mathcal{Z}$ in the space of generic arrangements $H^0_i,i=1,...,n$  (see \cite{man}, \cite{athana} and \cite{BB}), but still almost unknown for $\A \not\in \mathcal{Z}$.
In 2016, Libgober and Settepanella (cf.\cite{sette}) gave a sufficient geometric condition for an arrangement $\A$ not to be very generic, i.e. $\A \not\in \mathcal{Z}$.
In particular in case $k = 3$, their result shows that multiplicity $3$ codimension $2$ intersections of hyperplanes in $\B(n, 3, \A)$ appears if and only if collinearity conditions for points at infinity of lines, intersections of certain planes in $\A$, are satisfied  ( Theorem 3.8 in \cite{sette}) .
More recently (see \cite{SoSuSi}) authors applied this result to show that points in specific degree 2 hypersurface in Grassmannian $Gr(3, \CC^n)$ correspond to generic arrangements of $n$ hyperplanes in $\C^3$ with associated discriminantal arrangement having intersections of multiplicity 3 in codimension 2 (Theorem 5.4 in \cite{SoSuSi}). 
In this paper we look at Pappus's configuration (see Figure \ref{fig1} ) as a generic arrangement of 6 lines in $\PP^2$ which intersection points satisfy certain collinearity conditions (see Figure \ref{fig2}). This allows us to apply results on \cite{sette} and \cite{SoSuSi} to restate and re-prove Pappus's Theorem.\\
More in details, let $\A $ be a generic arrangement in $\CC^3$ and $\A_{\infty}$ the arrangement of lines in $H_{\infty} \simeq \PP^2$ directions at infinity of planes in $\A$. The space of generic arrangements of $n$ lines in $(\PP^2)^n$ is Zariski open set $U$ in the space of all arrangements of $n$ lines in $(\PP^2)^n$. On the other hand in $Gr(3, \CC^n)$ there is open set $U'$ consisting of $3$-spaces intersecting each coordinate hyperplane transversally (i.e. having dimension of intersection equal $2$). One has also one set $\tilde U$ in $Hom(\CC^3, \CC^n)$ consisting of embeddings with image transversal to coordinate hyperplanes and $\tilde U/GL(3)=U'$ and $\tilde U/(\CC^*)^n=U$. 
Hence generic arrangements in $\CC^3$ can be regarded as points in $Gr(3, \CC^n)$. Let $\{s_1<\dots < s_6\} \subset \{1, \dots, n\}$ be a set of indices of a generic arrangement $\A= \{ H_1^0, \dots, H_n^0\}$ in $\CC^3$, $\alpha_i$ the normal vectors of $H_i^0$'s and $\beta_{ijl} = det (\alpha_i, \alpha_j, \alpha_l)$. For any permutation 
$\sigma \in \mathbf{S_6}$ denote by $[\sigma] =\{\{i_1, i_2\},\{i_3, i_4\},\{i_5, i_6\}\}$, $i_j=s_{\sigma(j)}$, and by  ${\rm Q}_\sigma$ the quadric in $Gr(3, \CC^n)$ of equation $\beta_{i_1 i_3 i_4} \beta_{i_2 i_5 i_6} - \beta_{i_2 i_3 i_4} \beta_{i_1 i_5 i_6} = 0$. The following theorem, equivalent to the Pappus's hexagon Theorem, holds.

\smallskip
\noindent {\it {\bf Theorem 5.3.} (Pappus's Theorem)
For any disjoint classes $[\sigma_1]$ and $[\sigma_2]$, there exists a unique class $[\sigma_3]$ disjoint from $[\sigma_1]$ and $[\sigma_2]$ such that 
\begin{equation*}
{\rm Q}_{\sigma_{i_1}} \cap {\rm Q}_{\sigma_{i_2}} = \bigcap_{i=1}^3 {\rm Q}_{\sigma_i} \quad .
\end{equation*}
for any $\{i_1, i_2\} \subset [3]$.}

\smallskip

\noindent
In the rest of the paper, we retrieve the Hesse configuration of lines studying intersections of six quadrics of the form ${\rm Q}_\sigma$ for opportunely chosen $[\sigma]$. This lead to a better understanding of differences in combinatorics of Discriminantal arrangement in complex and real case. Indeed it turns out that this difference is connected with existence of the Hesse arrangement (see \cite{OT}) in $\PP^2(\CC)$, but not in $\PP^2(\RR)$. \\ 
From above results it seems very likely that a deeper understanding of combinatorics of Discriminantal arrangements arising from non very generic arrangements of hyperplanes in $\CC^k$ ( i.e. $\A \notin \mathcal{Z}$ ), could lead to new connections between higher dimensional special configurations of hyperplanes ( points ) in projective space and Grassmannian. Viceversa, known results in algebraic geometry could help in understanding combinatorics of Discriminantal arrangements in non very generic case. Moreover we conjecture that regularity in the geometry of Discriminantal arrangement could lead to results on hyperplanes arrangements with high multiplicity intersections, e.g. , in case $k=3$, line arrangements in $\PP^2$ with high number of triple points (see Remark 6.6). This will be object of further studies.\\
The content of the paper is the following. \\
In section \ref{pre} we recall definition of Discriminantal arrangement from \cite{man}, basic notions on Grassmannian, and definitions and results from \cite{SoSuSi}.
In section \ref{motivation} we provide an example of the case of 6 hyperplanes in $\CC^3$. 
In section \ref{pappusvariety} we define and study Pappus hypersurface.
Section \ref{pappusthm} contains Pappus's theorem in $Gr(3, \CC^n)$ and its proof.
In the last section we study intersections of higher numbers of quadrics and Hesse configuration.


\section{Preliminaries}\label{pre}

\subsection{Discriminantal arrangement}\label{discarr}

Let $H^0_i, i=1,...,n$ be a generic arrangement in $\CC^k, k<n$ i.e. 
a collection of hyperplanes such that $\codim \bigcap_{i \in K,
 \mid K\mid=p} H_i^0=p$. 
Space of parallel translates $\SS(H_1^0,...,H_n^0)$ (or simply $\SS$ when 
dependence on $H_i^0$ is clear or not essential)
is the space of $n$-tuples
$H_1,...,H_n$ such that either $H_i \cap H_i^0=\emptyset$ or 
$H_i=H_i^0$ for any $i=1,...,n$.
One can identify $\SS$ with $n$-dimensional affine space $\CC^n$ in
such a way that $(H^0_1,...,H^0_n)$ corresponds to the origin.  In particular, an ordering of hyperplanes in $\A$ determines the coordinate system in $\SS$ (see \cite{sette}).\\
We will use the compactification of $\CC^k$ viewing it 
as $\PP^k(\CC)\setminus H_{\infty}$ endowed with collection of hyperplanes
$\bar H^0_i$ which are projecive closures of affine hyperplanes
$H^0_i$. Condition of genericity is equivalent to $\bigcup_i \bar H^0_i$ 
being a normal crossing divisor in $\PP^k(\CC)$.\\
Given a generic arrangement $\A$ in $\CC^k$ 
formed by hyperplanes $H_i, i=1,...,n$
{\it the trace at infinity}, denoted by $\A_{\infty}$,  is the arrangement 
formed by hyperplanes 
$H_{\infty,i}=\bar H^0_i\cap H_{\infty}$.  The trace $\mathcal{A}_{\infty}$ of an arrangement $\mathcal{A}$ determines the space of parallel translates $\mathbb{S}$ (as a subspace in the space of $n$-tuples of hyperplanes in $\mathbb{P}^k$).\\
Fixed a generic arrangement  $\mathcal{A}$, consider the closed subset of $\mathbb{S}$ formed by those collections which fail to form a generic arrangement. This subset is a union of hyperplanes with each hyperplane $D_L$ corresponding to a subset $L = \{ i_1, \dots, i_{k+1} \} \subset$  [$n$] $\coloneqq \{ 1, \dots, n \}$ and consisting of $n$-tuples of translates of hyperplanes $H_1^0, \dots, H_n^0$ in which translates of $H_{i_1}^0, \dots, H_{i_{k+1}}^0$ fail to form a generic arrangement. The arrangement $\B(n, k, \A)$ of hyperplanes $D_L$ is called $Discriminantal$ $arrangement$ and has been introduced by Manin and Schechtman in \cite{man}.  Notice that $\B(n, k, \A)$ only depends on the trace at infinity $\mathcal{A}_{\infty}$ hence it is sometimes more properly denoted by $\B(n, k,\mathcal{A}_{\infty})$.

\subsection{Good 3s-partitions}
Given $s \geq 2$ and $n \geq 3s$, \textit{a good $3s$-partition} ( see \cite{SoSuSi}) is a set $\mathbb{T} = \{ L_1, L_2, L_3 \}$, with $L_i$ subsets of $[n]$ such that $|L_i| = 2s$, $|L_i \cap L_j| = s$ ($i \neq j$), $L_1 \cap L_2 \cap L_3 = \emptyset$ (in particular $|\bigcup L_i| = 3s$), i.e. $L_1 = \{ i_1, \dots, i_{2s} \}, L_2 = \{ i_1,\dots, i_s, i_{2s+1}, \dots, i_{3s} \}, L_3 = \{ i_{s+1}, \dots, i_{3s} \}$. \\
Notice that given a generic arrangement $\mathcal{A} $ in $\mathbb{C}^{2s-1}$, subsets $L_i$ define hyperplanes $D_{L_i}$ in the Discriminantal arrangement $\B(n, 2s-1,\mathcal{A}_{\infty})$. In this paper we are mainly interested in the case $s=2$ corresponding to generic arrangements in $\CC^3$.

\subsection{Matrices $A(\mathcal{A}_\infty)$ and $A_{\mathbb{T}}(\mathcal{A}_\infty)$ }\label{plucker}
Let $\alpha_i = (a_{i1}, \dots , a_{ik}$) be  the normal vectors of hyperplanes $H_i$, $1 \leq i \leq n$, in the generic arrangement $\mathcal{A}$ in $\C^k$. Normal here is intended with respect to the usual dot product $$(a_1, \ldots, a_k)\cdot (v_1,\ldots, v_k)=\sum_i a_iv_i \quad .$$
Then the normal vectors to hyperplanes $D_L$, $L = \{s_1 < \dots < s_{k+1} \} \subset$ [$n$] in $ \mathbb{S} \simeq \mathbb{C}^n$ are nonzero vectors of the form 
\begin{equation}\label{eq:normvec}
\alpha_L = \sum^{k+1}_{i=1} (-1)^i \det (\alpha_{s_1}, \dots, \hat{\alpha_{s_i}}, \dots, \alpha_{s_{k+1}})e_{s_i} \quad ,
\end{equation} 
where $\{e_j\}_{1\leq j \leq n}$ is the standard basis of $\mathbb{C}^n$ (cf. \cite{BB}). 

Let $\mathcal{P}_{k+1}([n]) = \{L \subset [n] \mid |L| = k+1\}$ be the set of cardinality $k+1$ subsets of $[n]$. Following \cite{SoSuSi} we denote by
\begin{equation*}\label{An}
 A(\mathcal{A}_\infty) = (\alpha_L)_{L \in \mathcal{P}_{k+1} ([n])}
\end{equation*}
the matrix having in each row the entries of vectors $\alpha_L$ normal to hyperplanes $D_{L}$ and by $A_{\mathbb{T}}(\mathcal{A}_\infty)$ the submatrix  of $A(\mathcal{A}_\infty)$ with rows $\alpha_L$, $L \in \mathbb{T}$, $\mathbb{T} \subset \mathcal{P}_{k+1}([n])$. In this paper we are mainly interested in matrix $A_{\mathbb{T}}(\mathcal{A}_\infty)$ in the case of $\mathbb{T}$ good $6$-partition.

\subsection{Grassmannian $Gr(k,\CC^n)$}

Let $Gr(k, \CC^n)$ be the Grassmannian of $k$-dimensional subspaces of $\CC^n$ and 
\begin{eqnarray*}
\gamma: Gr(k, \CC^n) &\rightarrow& \mathbb{P}(\bigwedge^k \CC^n)  \\
<v_1, \dots, v_k> &\mapsto& [v_1 \wedge \dots \wedge v_k] \quad ,
\end{eqnarray*}
the Pl\"{u}cker embedding. Then $[x] \in \mathbb{P}(\bigwedge^k \CC^n)$ is in $\gamma(Gr(k, \CC^n))$ if and only if the map
\begin{eqnarray*}
\varphi_x : \CC^n &\rightarrow& \bigwedge^{k+1} \CC^n  \\ 
v &\mapsto& x \wedge v
\end{eqnarray*}
has kernel of dimension $k$, i.e. ker $\varphi_x = <v_1, \dots, v_k>$. If $e_1, \dots,e_n $ is a basis of $\CC^n$ then $e_I = e_{i_1}~\wedge~\dots~\wedge~e_{i_k}$, $I= \{ i_1, \dots, i_k \} \subset [n], i_1 < \dots < i_k$, is a basis for $\bigwedge^k \CC^n$ and $x \in \bigwedge^k \CC^n$ can be written uniquely as 

\begin{equation*}\label{betaformula}
x = \displaystyle \sum_{\substack {I \subseteq [n] \\ |I| = k}} \beta_I e_I 
= \sum_{1 \leq i_1 < \dots < i_k \leq n} \beta_{i_1 \dots i_k} (e_{i_1} \wedge \dots \wedge e_{i_k}) 
\end{equation*}
where homogeneous coordinates $\beta_I$ are the Pl\"{u}cker coordinates 
on $\mathbb{P}(\bigwedge^k \CC^n) \simeq \mathbb{P}^{\binom nk-1}(\CC)$ associated to 
the ordered basis $e_1, \dots, e_n$ of $\CC^n$. With this choice of basis for $\CC^n$ the matrix $M_x$ associated to $\varphi_x$ is a $\binom {n}{k+1} \times n$ matrix with rows indexed by subsets $I= \{ i_1, \dots, i_k \} \subset [n]$ and entries $b_{ij}=\left \{ \begin{array}{cc} (-1)^l \beta_{I\setminus\{j\}} & \mbox{ if } j=i_l \in I \\ 0 & \mbox{otherwise} \end{array} \right .$. Pl\"{u}cker relations, i.e conditions for dim(ker $\varphi_x$) = $k$, are vanishing conditions of all $(n-k+1) \times (n-k+1)$ minors of $M_x$. It is well known (see for instance \cite{harris}) that Pl\"{u}cker relations are degree 2 relations and they can also be written as 
\begin{equation}\label{pluck}
\sum_{l=0}^{k} (-1)^l \beta_{p_1 \dots p_{k-1}q_l} \beta_{q_0 \dots \hat{q_l} \dots q_k} =0
\end{equation}
for any $2k$-tuple $(p_1, \dots, p_{k-1}, q_0, \dots, q_k)$.

\begin{rem} Notice that vectors $\alpha_L$ in equation (\ref{eq:normvec}) normal to hyperplanes $D_L$ correspond to rows indexed by $L$ in the Pl\"{u}cker matrix $M_x$, that is $$A(\mathcal{A}_\infty)=M_x \quad ,$$ up to permutation of rows. 
Notice that, in particular, $\det (\alpha_{s_1}, \dots, \hat{\alpha_{s_i}}, \dots, \alpha_{s_{k+1}})$ is the Pl\"{u}cker coordinate $\beta_I$, $I = \{ s_1, s_2, \dots, s_{k+1} \} \backslash \{ s_i \}$.
\end{rem}


\subsection{Relation between intersections of lines in $\A_\infty$ and quadrics in $Gr(3, \CC^n)$}\label{relation}

Let $\A = \{H^0_1, \dots, H^0_n\}$ be a generic arrangement in $\CC^3$. If there exist $L_1, L_2, L_3 \subset [n]$ subsets of indices of cardinality $4$, such that codimension of $D_{L_1} \cap D_{L_2} \cap D_{L_3}$ is 2 then $\A$ is \textit{non very generic arrangement} (see \cite{BB}). \\
Let $\TT = \{L_1, L_2, L_3\}$ be a good 6-partition of indices $\{s_1, \dots, s_6\} \subset [n]$.  In \cite{sette}, authors proved that the codimension of $D_{L_1} \cap D_{L_2} \cap D_{L_3}$ is 2 if and only if points $\bigcap_{t \in L_1 \cap L_2} H_{\infty,t}$, $\bigcap_{t \in L_1 \cap L_3} H_{\infty,t}$ and $\bigcap_{t \in L_2 \cap L_3} H_{\infty,t}$ are collinear in $H_\infty$ (Lemma 3.1 \cite{sette}). 

Since $\alpha_{L_i}$ is vector normal to $D_{L_i}$, the codimension of $D_{L_1} \cap D_{L_2} \cap D_{L_3}$ is 2 if and only if rank $A_\TT(\A_\infty) = 2$, i.e. all $3 \times 3$ minors of $A_\TT(\A_\infty)$ vanish. In \cite{SoSuSi} authors proved the following Lemma.

\begin{lem}\label{lem:fin}(Lemma5.3 \cite{SoSuSi}) Let $\A$ be an arrangement of $n$ hyperplanes in $\CC^3$ and $\sigma.\mathbb{T} = \{ \{i_1, i_2, i_3, i_4\}$, $\{i_1, i_2, i_5, i_6\}, \{i_3, i_4, i_5, i_6\} \}$ a good $6$-partition of indices $s_1 < \ldots <s_6 \in [n]$ such that  $i_j=s_{\sigma(j)}$, $\sigma$ permutation in $\mathbf{S}_6$. Then $\rank A_{\sigma.\mathbb{T}}(\mathcal{A}_\infty)=2$ if and only if $\A$ is a point in the quadric of Grassmannian $Gr(3, \CC^n)$ of equation
\begin{equation}\label{eq:quadrica}
 \beta_{i_1 i_3 i_4} \beta_{i_2 i_5 i_6} - \beta_{i_2 i_3 i_4} \beta_{i_1 i_5 i_6} = 0  \quad .
\end{equation}
\end{lem}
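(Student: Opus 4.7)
The plan is to exploit the sparse block structure of the matrix $A_{\sigma.\TT}(\A_\infty)$. By equation (\ref{eq:normvec}) and the Remark, the row $\alpha_{L_j}$ has nonzero entries only in columns indexed by $L_j$, and each such entry is $\pm\beta_I$ with $I = L_j \setminus \{i\}$. Since $L_1 \cap L_2 \cap L_3 = \emptyset$, only the six columns $i_1,\ldots, i_6$ are relevant, and they split into three pairs $L_1 \cap L_2 = \{i_1, i_2\}$, $L_1 \cap L_3 = \{i_3, i_4\}$, $L_2 \cap L_3 = \{i_5, i_6\}$, in each of which exactly one of the three rows vanishes identically. Genericity of $\A$ guarantees that every $\beta_{abc}$ with three distinct indices is nonzero, so any two rows differ on some column and are linearly independent; in particular $\rank A_{\sigma.\TT}(\A_\infty) \geq 2$ always, and what must be characterized is when the rank drops to $2$.

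Next I would recast $\rank A_{\sigma.\TT}(\A_\infty) = 2$ as the existence of a nontrivial dependence $\lambda_1 \alpha_{L_1} + \lambda_2 \alpha_{L_2} + \lambda_3 \alpha_{L_3} = 0$. Looking at columns $i_3, i_4$, where only rows $\alpha_{L_1}$ and $\alpha_{L_3}$ are nonzero, and using genericity, a short support argument forces all three $\lambda_j$ to be nonzero. Restricting the dependence to columns $i_1, i_2$ then gives the homogeneous $2\times 2$ system
\begin{equation*}
\lambda_1\,\beta_{i_1 i_3 i_4} + \lambda_2\,\beta_{i_1 i_5 i_6} = 0, \qquad \lambda_1\,\beta_{i_2 i_3 i_4} + \lambda_2\,\beta_{i_2 i_5 i_6} = 0,
\end{equation*}
whose nontrivial solvability is exactly the vanishing of $\beta_{i_1 i_3 i_4}\beta_{i_2 i_5 i_6} - \beta_{i_2 i_3 i_4}\beta_{i_1 i_5 i_6}$. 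This proves the ``only if'' direction.

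For the converse, assume equation (\ref{eq:quadrica}) holds. I take $(\lambda_1, \lambda_2)$ to be the nontrivial solution of the above $2 \times 2$ system and define $\lambda_3$ from one of the column-$i_3, i_4$ equations, say $\lambda_3 = -\lambda_1 \beta_{i_1 i_2 i_4}/\beta_{i_4 i_5 i_6}$. The main obstacle is verifying that this single triple $(\lambda_1, \lambda_2, \lambda_3)$ simultaneously kills all six column-equations: consistency in cols $i_3, i_4$ requires $\beta_{i_1 i_2 i_4}\beta_{i_3 i_5 i_6} - \beta_{i_1 i_2 i_3}\beta_{i_4 i_5 i_6} = 0$, and in cols $i_5, i_6$ requires $\beta_{i_1 i_2 i_6}\beta_{i_3 i_4 i_5} - \beta_{i_1 i_2 i_5}\beta_{i_3 i_4 i_6} = 0$. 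Both are equivalent to (\ref{eq:quadrica}) via the Pl\"ucker relation (\ref{pluck}) applied on the six indices with $(p_1, p_2) = (i_1, i_2)$, namely
\begin{equation*}
\beta_{i_1 i_2 i_3}\beta_{i_4 i_5 i_6} - \beta_{i_1 i_2 i_4}\beta_{i_3 i_5 i_6} + \beta_{i_1 i_2 i_5}\beta_{i_3 i_4 i_6} - \beta_{i_1 i_2 i_6}\beta_{i_3 i_4 i_5} = 0,
\end{equation*}
combined with its companion obtained by taking $(p_1, p_2) = (i_3, i_4)$. These two Pl\"ucker identities together with the hypothesis force the two required consistency conditions, so $\rank A_{\sigma.\TT}(\A_\infty) \leq 2$, and equality follows from the lower bound established above.
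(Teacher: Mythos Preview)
The paper does not give its own proof of this lemma; it is quoted verbatim as Lemma~5.3 of \cite{SoSuSi}. So there is no ``paper proof'' to compare against, and your attempt must stand on its own.

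Your forward direction (``only if'') is fine: the block structure forces any nontrivial dependence to have all $\lambda_j\neq0$, and reading off columns $i_1,i_2$ gives the $2\times2$ determinant $\beta_{i_1 i_3 i_4}\beta_{i_2 i_5 i_6}-\beta_{i_2 i_3 i_4}\beta_{i_1 i_5 i_6}$.

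The converse, however, has a genuine gap. After fixing $(\lambda_1,\lambda_2)$ from columns $i_1,i_2$ and $\lambda_3$ from column $i_3$, the remaining check on column $i_4$ is indeed your condition (B). But the checks on columns $i_5,i_6$ are \emph{not} your condition (C). Condition (C) only says that the two equations in columns $i_5,i_6$ determine a consistent ratio $\lambda_2:\lambda_3$; it does not say that this ratio agrees with the specific values of $\lambda_2,\lambda_3$ you have already pinned down from columns $i_1,i_2,i_3$. Concretely, column $i_5$ imposes the cubic relation $a_1 b_5 c_3 + a_3 b_1 c_5=0$ (up to sign), i.e.\ the $3\times3$ minor on columns $i_1,i_3,i_5$, and similarly for $i_6$. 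These are the eight ``mixed'' minors (one column from each pair $\{i_1,i_2\},\{i_3,i_4\},\{i_5,i_6\}$), and they do not factor through (A), (B), or (C). That further input is needed is clear: a $3\times6$ matrix with the same zero pattern, e.g.
\[
\begin{pmatrix} 1&1&1&1&0&0\\ 2&2&0&0&1&1\\ 0&0&3&3&1&1 \end{pmatrix},
\]
satisfies (A)$=$(B)$=$(C)$=0$ yet has rank $3$. So the two Pl\"ucker identities you invoke are not enough; additional Pl\"ucker relations must be used to kill the mixed minors.

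One clean way to close the gap is to bypass the minor-by-minor check. The identity $(a\times b)\cdot(c\times d)=(a\cdot c)(b\cdot d)-(a\cdot d)(b\cdot c)$ applied with $a=\alpha_{i_1},\,b=\alpha_{i_2},\,c=\alpha_{i_3}\times\alpha_{i_4},\,d=\alpha_{i_5}\times\alpha_{i_6}$ shows that the left-hand side of (\ref{eq:quadrica}) equals $\det\bigl(\alpha_{i_1}\times\alpha_{i_2},\ \alpha_{i_3}\times\alpha_{i_4},\ \alpha_{i_5}\times\alpha_{i_6}\bigr)$, i.e.\ it vanishes exactly when the three intersection points $H_{\infty,i_1}\cap H_{\infty,i_2}$, $H_{\infty,i_3}\cap H_{\infty,i_4}$, $H_{\infty,i_5}\cap H_{\infty,i_6}$ are collinear. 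The collinearity criterion from \cite{sette} quoted in Section~\ref{relation} then gives $\rank A_{\sigma.\TT}(\A_\infty)=2$ directly.
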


\noindent
As consequence of above results, we obtain correspondence between points $x = \displaystyle \sum_{\substack {I \subset [n] \\ |I| = 3}}\beta_I e_I, \beta_I \neq 0,$ in quadric of equation (\ref{eq:quadrica}) and generic arrangements of $n$ hyperplanes $\A$ in $\CC^3$ such that $H_{\infty,i_1} \cap H_{\infty,i_2} $, $H_{\infty,i_3} \cap H_{\infty,i_4} $ and $H_{\infty,i_5} \cap H_{\infty,i_6} $ are collinear in $H_\infty$. Notice that condition $\beta_I \neq 0$ is direct consequence of $\A$ being generic arrangement.


\section{Motivating example of Pappus's Theorem for quadrics in $Gr(3, \CC^n)$}\label{motivation}

In classical projective geometry the following theorem is known as Pappus's theorem or Pappus's hhexagon theorem.

\begin{thm}[\textit{Pappus}]\label{pappus_1}
On a projective plane, consider two lines $l_1$ and $l_2$, and a couple of triple points $A, B, C$ and $A', B', C'$ which are on $l_1$ and $l_2$ respectively. Let $X, Y, Z$ be points of $AB' \cap A'B$, $AC' \cap A'C$ and $BC' \cap B'C$ respectively. Then there exists a line $l_3$ passing through the three points $X, Y, Z$ (see Figure \ref{fig1}).
\end{thm}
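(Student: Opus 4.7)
The plan is to translate the classical statement into a Pl\"{u}cker-coordinate identity on $Gr(3, \CC^6)$, which will follow from Lemma \ref{lem:fin} together with a single quadratic Pl\"{u}cker relation. First I would label the six ``cross lines'' of the configuration as $H_1 = AB'$, $H_2 = A'B$, $H_3 = AC'$, $H_4 = A'C$, $H_5 = BC'$, $H_6 = B'C$ and view them as the trace at infinity $\mathcal{A}_\infty$ of a generic arrangement $\A = \{H_1^0, \dots, H_6^0\}$ of planes in $\CC^3$. By the material in Section \ref{plucker}, this identifies $\A$ with a point in $Gr(3, \CC^6)$ whose Pl\"{u}cker coordinates $\beta_{ijk}$ are all nonzero.

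Next I would translate the two given collinearity hypotheses into quadric equations. With the labelling above, $A = H_1 \cap H_3$, $B = H_2 \cap H_5$, $C = H_4 \cap H_6$, so the collinearity of $A,B,C$ on $l_1$ corresponds to the good $6$-partition associated to the matching $M_1 = \{\{1,3\},\{2,5\},\{4,6\}\}$; the analogous reading for $l_2$ yields $M_2 = \{\{2,4\},\{1,6\},\{3,5\}\}$. Applying Lemma \ref{lem:fin} to each matching and rewriting every Pl\"{u}cker coordinate with indices in increasing order (tracking the signs coming from the antisymmetry of $\beta_{\bullet}$), one obtains the two equations
\begin{equation*}
\beta_{125}\beta_{346} + \beta_{146}\beta_{235} = 0, \qquad \beta_{126}\beta_{345} + \beta_{146}\beta_{235} = 0,
\end{equation*}
whose difference gives the identity $\beta_{125}\beta_{346} = \beta_{126}\beta_{345}$.

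The last step is to invoke the Pl\"{u}cker relation \eqref{pluck} on $Gr(3, \CC^6)$ with $(p_1,p_2;q_0,q_1,q_2,q_3) = (3,4;1,2,5,6)$, which after reordering indices becomes
\begin{equation*}
\beta_{134}\beta_{256} - \beta_{234}\beta_{156} + \beta_{126}\beta_{345} - \beta_{125}\beta_{346} = 0.
\end{equation*}
The identity derived above makes the last two terms cancel, leaving $\beta_{134}\beta_{256} - \beta_{234}\beta_{156} = 0$. This is exactly the quadric equation attached by Lemma \ref{lem:fin} to the matching $M_3 = \{\{1,2\},\{3,4\},\{5,6\}\}$, which encodes the collinearity of $X = H_1 \cap H_2$, $Y = H_3 \cap H_4$, $Z = H_5 \cap H_6$, so $X,Y,Z$ lie on a common line $l_3$.

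The part I expect to require the most care is the sign bookkeeping when normalizing the quadric equation of Lemma \ref{lem:fin} (which a priori depends on the ordered representative chosen inside each pair of the matching) so that the two hypothesis equations and the Pl\"{u}cker relation above combine with the correct signs; the fact that everything lines up is really the content of the proof. A secondary point is the reduction to the generic case, where all $\beta_{ijk} \ne 0$ and the six cross lines are in general position; degenerate Pappus configurations can be handled by specialization applied to the polynomial identities produced in the main argument.
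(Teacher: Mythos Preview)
Your argument is correct and follows the same overall strategy as the paper: label the six cross lines, apply Lemma~\ref{lem:fin} to convert the two given collinearities into quadric equations $P_{\sigma}=0$ in Pl\"ucker coordinates, and then derive the third quadric equation algebraically. Your matchings $M_1,M_2,M_3$ are exactly the paper's $[\omega_2],[\omega_3],[\omega_1]$ in the proof of Theorem~\ref{thm:pappus}.

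The one noteworthy difference is in the final algebraic step. You subtract your two hypothesis equations to obtain $\beta_{125}\beta_{346}=\beta_{126}\beta_{345}$ and then invoke a Pl\"ucker relation to conclude $\beta_{134}\beta_{256}-\beta_{234}\beta_{156}=0$. The paper instead chooses the representatives of the three classes so that the defining polynomials themselves satisfy the linear identity $P_{\omega_2}-P_{\omega_1}=P_{\omega_3}$ \emph{as polynomials in the $\beta_I$}, without ever using a Pl\"ucker relation. Concretely, with their normalization one has $P_{\omega_2}=\beta_{136}\beta_{245}+\beta_{134}\beta_{256}$ and $P_{\omega_3}=\beta_{156}\beta_{234}+\beta_{136}\beta_{245}$, and the cancellation is immediate. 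So the paper's version is a slightly stronger statement (a linear dependence among the three quadratic forms in ambient $\mathbb{P}(\bigwedge^3\CC^n)$, not just on $Gr(3,\CC^n)$), at the cost of having to pick the representatives inside each class $[\sigma]$ carefully; your version avoids that bookkeeping by spending one Pl\"ucker relation. Either route proves the theorem.
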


\begin{figure}[htbp]
 \begin{center}
  \includegraphics[width=50mm,bb =0 50 450 500]{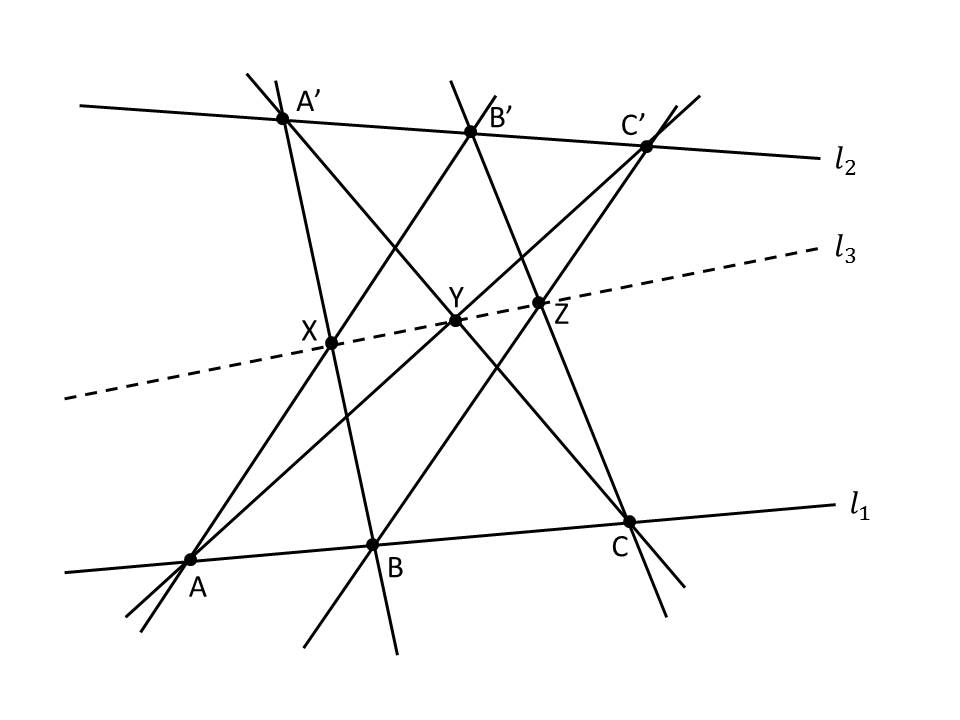}
 \end{center}
 \caption{Original Pappus's Theorem}
 \label{fig1}
\end{figure}

\noindent
This theorem was originally stated by Pappus of Alexandria around 290-350 A.D. .\\
In this section, we restate this classical theorem in terms of quadrics in Grassmannian. Indeed the six lines $AB', A'B$, $BC', B'C$, $AC', A'C$ $\in \PP^2(\CC)$ correspond to lines in the trace at infinity $\A_\infty$ of a generic arrangement $\A$ in $\CC^3$ and lines $l_1, l_2$ and $l_3$ correspond to collinearity conditions for intersection points of lines in $\A_\infty$.\\
\noindent
Consider a generic arrangement $\A = \{ H_1, \dots, H_6 \}$ of $6$ hyperplanes in $\CC^3$, $\mathcal{A}_\infty$ its trace at infinity and $\mathbb{T} = \{ L_1, L_2, L_3 \}$ the good $6$-partition defined by $L_1 = \{ 1, 2, 3, 4 \}$,  $L_2 = \{ 1, 2, 5, 6 \}$, $L_3 = \{ 3, 4, 5, 6 \}$. By Lemma\ref{lem:fin} we get that the triple points ${\displaystyle \bigcap_{i \in L_1 \cap L_2}} \bar H_i \cap H_\infty$, ${\displaystyle \bigcap_{i \in L_1 \cap L_3}} \bar H_i \cap H_\infty$, ${\displaystyle \bigcap_{i \in L_2 \cap L_3}} \bar H_i \cap H_\infty$ are collinear if and only if $\A$ is a point of the quadric
\begin{equation*}
{\rm Q}_1: \beta_{134}\beta_{256} - \beta_{234}\beta_{156} =0
\end{equation*}
in $Gr(3, \CC^6)$.\\
Analogously if $\mathbb{T}' = \{ L_1', L_2', L_3' \}$, $L_1' = \{ 4, 6, 2, 5 \}$, $L_2' = \{ 4, 6, 1, 3 \}$, $L_3' = \{ 2, 5, 1, 3 \}$ and $\mathbb{T}'' = \{ L_1'', L_2'', L_3'' \}$, $L_1'' = \{ 2, 4, 1, 6 \}$,  $L_2'' = \{ 2, 4, 3, 5 \}$, $L_3'' = \{ 1, 6, 3, 5 \}$ are different good 6-partitions then triple points ${\displaystyle \bigcap_{i \in L_1' \cap L_2'}} \bar H_i \cap H_\infty$, ${\displaystyle \bigcap_{i \in L_1' \cap L_3'}} \bar H_i \cap H_\infty$, ${\displaystyle \bigcap_{i \in L_2' \cap L_3'}} \bar H_i^{t_i} \cap H_\infty$ and ${\displaystyle \bigcap_{i \in L_1'' \cap L_2''}} \bar H_i \cap H_\infty$, ${\displaystyle \bigcap_{i \in L_1'' \cap L_3''}} \bar H_i \cap H_\infty$, ${\displaystyle \bigcap_{i \in L_2'' \cap L_3''}} \bar H_i \cap H_\infty$ are collinear if and only if $\A$ is, respectively, a point of quadrics
\begin{equation}
\begin{split}
&{\rm Q}_2: \beta_{425}\beta_{613} - \beta_{625}\beta_{413} =0 \mbox{ and }\\
&{\rm Q}_3: \beta_{216}\beta_{435} - \beta_{416}\beta_{235} =0  \quad .
\end{split}
\end{equation}
With above remarks and notations we can restate Pappus's theorem as follows (see Figure \ref{fig2}).
\begin{thm}(Pappus's theorem)
Let $\A = \{ H_1, \dots, H_6 \}$ be a generic arrangement of hyperplanes in $\CC^3$. If $\A$ is a point of two of three quadrics ${\rm Q}_1, {\rm Q}_2$ and ${\rm Q}_3$ in Grassmannian $Gr(3, \CC^6)$ , then $\A$ is also a point of the third.
In other words
$$
{\rm Q}_{i_1} \cap {\rm Q}_{i_2}  = \bigcap_{i=1}^3 {\rm Q}_i, \quad \{ i_1, i_2 \} \subset [3] . 
$$
\end{thm}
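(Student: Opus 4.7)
The plan is to use Lemma \ref{lem:fin} to translate each condition ``$\A \in {\rm Q}_i$'' into a collinearity of three triple points in $H_\infty \simeq \PP^2(\CC)$, and then to invoke the classical Pappus's hexagon Theorem. As computed in the lines preceding the theorem, $\A \in {\rm Q}_1$ is equivalent to the collinearity of $X = \bar H_1 \cap \bar H_2 \cap H_\infty$, $Y = \bar H_3 \cap \bar H_4 \cap H_\infty$, $Z = \bar H_5 \cap \bar H_6 \cap H_\infty$; analogously $\A \in {\rm Q}_2$ corresponds to the collinearity of $A = \bar H_1 \cap \bar H_3 \cap H_\infty$, $B = \bar H_2 \cap \bar H_5 \cap H_\infty$, $C = \bar H_4 \cap \bar H_6 \cap H_\infty$, and $\A \in {\rm Q}_3$ to the collinearity of $A' = \bar H_2 \cap \bar H_4 \cap H_\infty$, $B' = \bar H_1 \cap \bar H_6 \cap H_\infty$, $C' = \bar H_3 \cap \bar H_5 \cap H_\infty$. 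With this labeling, the nine points together with the six traces $\bar H_i \cap H_\infty$ form a standard Pappus configuration in $H_\infty$, with $\bar H_1 \cap H_\infty, \ldots, \bar H_6 \cap H_\infty$ realizing the six ``hexagon'' lines $AB', A'B, AC', A'C, BC', B'C$ respectively.

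The inclusion ${\rm Q}_2 \cap {\rm Q}_3 \subset {\rm Q}_1$ is then exactly the classical Pappus's Theorem applied to the hexagon $AB'CA'BC'$ with base lines $ABC$ and $A'B'C'$, whose three pairs of opposite sides are $\bar H_1, \bar H_2$; $\bar H_6, \bar H_5$; $\bar H_4, \bar H_3$ and meet in $X, Z, Y$. For the other two inclusions, the plan is to apply classical Pappus to a different hexagon inscribed on the same nine points, now taking the line $XYZ$ as one of the two base lines. For ${\rm Q}_1 \cap {\rm Q}_2 \subset {\rm Q}_3$, I would inscribe the hexagon $AXBZCY$ on the base lines $ABC$ and $XYZ$; from the incidences $A \in H_1 \cap H_3$, $B \in H_2 \cap H_5$, $C \in H_4 \cap H_6$, $X \in H_1 \cap H_2$, $Y \in H_3 \cap H_4$, $Z \in H_5 \cap H_6$ one reads off that its six consecutive sides are $\bar H_1, \bar H_2, \bar H_5, \bar H_6, \bar H_4, \bar H_3$ and that its three opposite-side intersections are $B', A', C'$, so Pappus yields the collinearity of $A', B', C'$. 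The symmetric hexagon $A'XB'ZC'Y$ inscribed on $A'B'C'$ and $XYZ$ handles ${\rm Q}_1 \cap {\rm Q}_3 \subset {\rm Q}_2$, its opposite-side intersections working out to $B, A, C$.

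The main obstacle is the combinatorial bookkeeping: for each of the three choices of pair of ``base lines'' from $\{ABC, A'B'C', XYZ\}$, one must select the cyclic ordering of the six remaining Pappus points whose consecutive joins coincide with the six traces $\bar H_i \cap H_\infty$ (the uniqueness of such an ordering follows from the $(3,3)$-regularity of the incidence table above), and then check that the three opposite-side intersections are precisely the missing triple; once verified, each implication reduces to one invocation of the classical Pappus's hexagon Theorem. As a cleaner purely algebraic alternative, one can rewrite the three defining polynomials of ${\rm Q}_1, {\rm Q}_2, {\rm Q}_3$ in ascending-index Pl\"ucker coordinates and apply a single Pl\"ucker relation of the form \eqref{pluck} (with $(p_1,p_2)=(1,6)$ and $(q_0,q_1,q_2,q_3)=(2,3,4,5)$) to obtain the identity ${\rm Q}_1 + {\rm Q}_2 + {\rm Q}_3 \equiv 0$ on $Gr(3,\CC^6)$, from which the vanishing of any two of the three equations immediately forces the vanishing of the third.
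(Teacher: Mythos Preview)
Your proposal is correct, and your ``cleaner purely algebraic alternative'' at the end is essentially the paper's own argument. In the proof of Theorem~\ref{thm:pappus} the paper rewrites the three defining polynomials using only the antisymmetry $\beta_{ijk}=-\beta_{jik}$ and observes the linear dependence $P_{\omega_2}-P_{\omega_1}=P_{\omega_3}$, identically in the $\beta$'s. With the particular representatives displayed in Section~\ref{motivation} (the ones you work with), the analogous identity $P_1+P_2+P_3=0$ holds only modulo the Pl\"ucker relation you single out, whereas with the paper's choice of representatives for the same three classes it holds already in $\CC[\beta_I]$ before restricting to the Grassmannian; either way the conclusion is immediate.

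Your primary, geometric route is a genuinely different approach. You translate each condition $\A\in{\rm Q}_i$ via Lemma~\ref{lem:fin} back into a collinearity in $H_\infty$ and then invoke the classical Pappus hexagon Theorem three times, once for each choice of a pair of ``base lines'' among $\{ABC,\,A'B'C',\,XYZ\}$; the hexagon bookkeeping you sketch (e.g.\ $AB'CA'BC'$ with opposite sides $H_1,H_2$; $H_6,H_5$; $H_4,H_3$) checks out. The trade-off is that this argument proves the stated theorem only \emph{by reducing it to} classical Pappus, whereas the paper's declared aim (cf.\ the introduction) is to give an \emph{independent} proof of Pappus's Theorem via the linear dependence among the defining polynomials of the quadrics. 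So your geometric argument buys conceptual transparency---it makes the equivalence with the classical statement explicit---while the paper's algebraic argument buys an autonomous proof that does not presuppose what it sets out to establish.
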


\begin{figure}[htbp] 
 \begin{center}
  \includegraphics[width=50mm,bb =0 50 450 500]{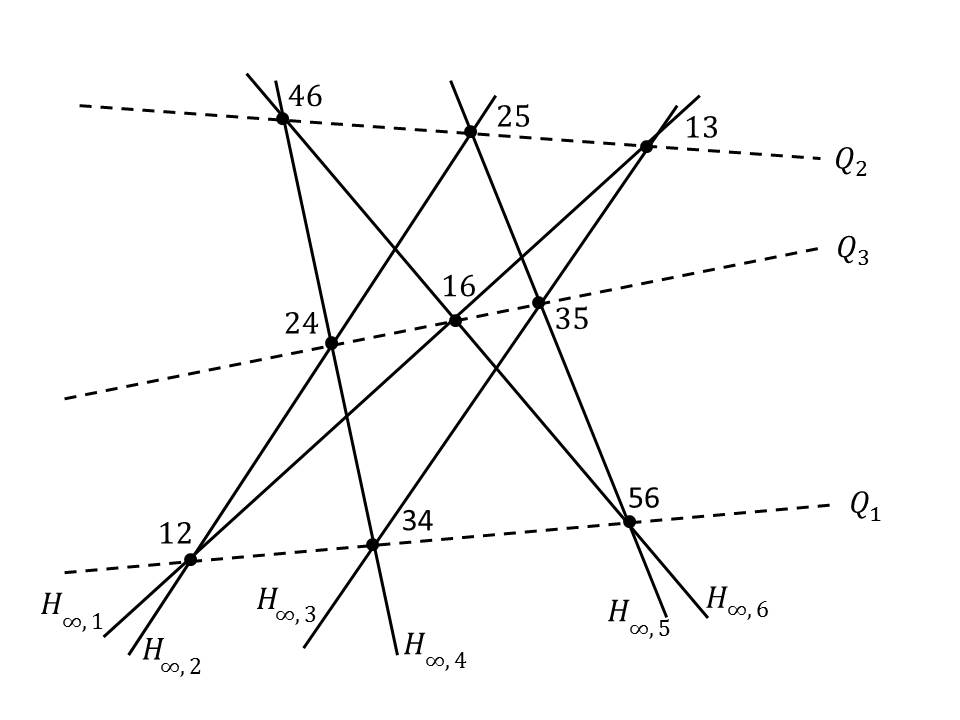}
 \end{center}
 \caption{Trace at infinity of $\mathcal{A} \in \bigcap_{i=1}^3 {\rm Q}_i$.
 {\footnotesize $ij$ denotes $H_{\infty,i} \cap H_{\infty,j}.$}}
\label{fig2}
\end{figure}

We develop this argument in the following sections.

\section{Pappus Variety} \label{pappusvariety}

In this section, we consider a generic arrangement $\{H_1, \dots, H_n \}$ in $\CC^3$ ($n \geq 6$). Let's introduce basic notations that we will use in the rest of the paper.

\begin{note} Let  $\{s_1, \dots, s_6\}$ be a subset of indices $\{1,\ldots,n\}$ and $\TT = \{ L_1, L_2, L_3 \}$ be the good $6$-partition given by $L_1 = \{s_1, s_2, s_3, s_4\}$, $L_2 = \{s_1, s_2, s_5, s_6\}$ and $L_3 = \{s_3, s_4, s_5, s_6\}$. Then for any permutation $\sigma \in {\bd S_6}$ we denote by $\sigma.\TT = \{ \sigma.L_1, \sigma.L_2, \sigma.L_3\}$ the good $6$-partition given by subsets $\sigma.L_1 = \{i_1, i_2, i_3, i_4\}, \sigma.L_2 = \{i_1, i_2, i_5, i_6\}, \sigma.L_3 = \{i_3, i_4, i_5, i_6\}$ with $i_j=s_{\sigma(j)}$. Accordingly, we denote by ${\rm Q}_{\sigma}$ the quadric in $Gr(3, \CC^n)$ of equation
\begin{center}
${\rm Q}_{\sigma} : \beta_{i_1i_3i_4}\beta_{i_2i_5i_6} -\beta_{i_2i_3i_4}\beta_{i_1i_5i_6} =0$ \quad.
\end{center}
\end{note}
\noindent
The following lemma holds.
\begin{lem}\label{lem:coincide}
Let $\sigma, \sigma' \in {\bd S_6}$ be distinct permutations, then ${\rm Q_{\sigma} = Q_{\sigma'}}$ if and only if there exists $\tau \in {\bd S_3}$ such that $\sigma.L_i \cap \sigma.L_j = \sigma'.L_{\tau(i)} \cap \sigma'.L_{\tau(j)}$ ($1\leq i<j\leq3$).
\end{lem}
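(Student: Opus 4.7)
The plan rests on the observation that the quadric ${\rm Q}_\sigma$ depends only on the unordered partition
\[
\mathcal{P}_\sigma := \{\sigma.L_1 \cap \sigma.L_2,\; \sigma.L_1 \cap \sigma.L_3,\; \sigma.L_2 \cap \sigma.L_3\}
\]
of $\{s_1, \dots, s_6\}$ into three disjoint pairs. The condition in the statement, that some $\tau \in \bd{S_3}$ identifies the pairwise intersections for $\sigma$ with those for $\sigma'$, is equivalent to $\mathcal{P}_\sigma = \mathcal{P}_{\sigma'}$. So the lemma reduces to proving ${\rm Q}_\sigma = {\rm Q}_{\sigma'}$ if and only if $\mathcal{P}_\sigma = \mathcal{P}_{\sigma'}$.

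For the direction $(\Leftarrow)$, I would first check that the defining equation $\beta_{i_1 i_3 i_4}\beta_{i_2 i_5 i_6} - \beta_{i_2 i_3 i_4}\beta_{i_1 i_5 i_6} = 0$ is invariant, up to a global sign, under each transposition within a single pair: the swaps $i_3 \leftrightarrow i_4$ and $i_5 \leftrightarrow i_6$ negate both monomials by antisymmetry of Pl\"ucker coordinates, while $i_1 \leftrightarrow i_2$ just exchanges them. Hence ${\rm Q}_\sigma$ depends only on the \emph{ordered} triple of pairs. To exchange the role of the pair $\{i_1,i_2\}$ with another pair, I would apply Pl\"ucker relation (\ref{pluck}) for $k=3$ with $(p_1, p_2; q_0, q_1, q_2, q_3) = (i_5, i_6; i_1, i_2, i_3, i_4)$, which gives on $Gr(3,\CC^n)$
\[
\beta_{i_1 i_3 i_4}\beta_{i_2 i_5 i_6} - \beta_{i_2 i_3 i_4}\beta_{i_1 i_5 i_6} \;\equiv\; \beta_{i_1 i_2 i_4}\beta_{i_3 i_5 i_6} - \beta_{i_1 i_2 i_3}\beta_{i_4 i_5 i_6}.
\]
The right-hand side is, up to sign, the defining equation of the quadric in which $\{i_3, i_4\}$ plays the role previously played by $\{i_1, i_2\}$. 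Applying the analogous relation with tuple $(i_3, i_4; i_1, i_2, i_5, i_6)$ transfers the same role to $\{i_5, i_6\}$, realising the full $\bd{S_3}$ action on the three pairs and giving ${\rm Q}_\sigma = {\rm Q}_{\sigma'}$.

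For the direction $(\Rightarrow)$, I would invoke Lemma~\ref{lem:fin}: an arrangement $\A$ lies in ${\rm Q}_\sigma$ if and only if the three points $\bigcap_{t \in P} H_{\infty,t}$, $P \in \mathcal{P}_\sigma$, are collinear in $H_\infty$, and likewise with $\sigma'$. Assuming for contradiction that $\mathcal{P}_\sigma \neq \mathcal{P}_{\sigma'}$, at least one pair differs between the two partitions, so the two triples of intersection points forced to be collinear are genuinely distinct. To conclude I would exhibit a single generic arrangement satisfying the $\sigma$-collinearity but not the $\sigma'$-collinearity: fix five of the lines at infinity in general position in $\PP^2$ and choose the sixth so as to align the $\sigma$-triple on a line, then verify by a dimension count that the $\sigma'$-collinearity imposes a further, independent condition and so fails generically on this one-parameter family.

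The main obstacle is this last verification: establishing rigorously that two distinct partitions into pairs really do cut out distinct hypersurfaces in $Gr(3, \CC^n)$, i.e.\ that $f_\sigma$ and $f_{\sigma'}$ are never proportional modulo the Pl\"ucker ideal when $\mathcal{P}_\sigma \neq \mathcal{P}_{\sigma'}$. This amounts to ruling out accidental algebraic coincidences among the Pappus-type collinearity conditions, and could alternatively be handled by a direct computation on a suitable affine chart of $Gr(3,\CC^n)$ (for instance $n=6$, fixing three complementary Pl\"ucker coordinates to $1$). The direction $(\Leftarrow)$, by contrast, reduces to the explicit Pl\"ucker identity above combined with the obvious within-pair antisymmetries.
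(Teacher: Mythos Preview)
Your proposal is correct, but your route to $(\Leftarrow)$ differs from the paper's in an instructive way. You work directly with the defining polynomials: within-pair transpositions give a sign, and a single three-term Pl\"ucker relation shows that shuffling which pair plays the distinguished role of $\{i_1,i_2\}$ does not change the quadric. The paper instead observes that the three pairwise intersections $\sigma.L_i\cap\sigma.L_j$ determine the three sets $\sigma.L_l$ themselves (since each $L_l$ is the union of two of these intersections), so the hypothesis simply says $\{\sigma.L_1,\sigma.L_2,\sigma.L_3\}=\{\sigma'.L_1,\sigma'.L_2,\sigma'.L_3\}$. Consequently $A_{\sigma.\TT}(\A_\infty)$ and $A_{\sigma'.\TT}(\A_\infty)$ have the same rows, their rank-$2$ loci coincide, and Lemma~\ref{lem:fin} together with density of the locus $\{\beta_I\ne0\}$ gives ${\rm Q}_\sigma={\rm Q}_{\sigma'}$. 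This avoids any Pl\"ucker identity and makes the invariance transparent; your approach, on the other hand, gives the stronger information that the two polynomials are actually equal up to sign modulo the Pl\"ucker ideal.

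For $(\Rightarrow)$ you and the paper argue in the same spirit, translating equality of the quadrics into equality of the rank-$2$ conditions and hence of the row sets $\{\sigma.L_l\}$. The paper is terse here (it asserts that the same rank locus forces the same three rows), while you are more explicit about the need to exhibit an arrangement separating two distinct collinearity conditions. Your proposed dimension-count / affine-chart verification is a reasonable way to close this; the paper leaves the analogous step implicit.
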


\begin{proof}
By definition of good 6-partition we have that
\begin{eqnarray*}
L_1 &=& (L_1 \cap L_2) \cup (L_1 \cap L_3) \quad , \\
L_2 &=& (L_2 \cap L_1) \cup (L_2 \cap L_3) \quad , \\
L_3 &=& (L_3 \cap L_1) \cup (L_3 \cap L_2) \quad .
\end{eqnarray*}
Then there exists $\tau \in {\bd S_3}$ such that $\sigma$ and $\sigma'$ satisfy $\sigma.L_i \cap \sigma.L_j = \sigma'.L_{\tau(i)} \cap \sigma'.L_{\tau(j)}$ ($1\leq i<j\leq3$) if and only if $\sigma.L_l = \sigma'.L_{\tau(l)}$ for $l =1, 2, 3$, that is $A_{\sigma'.\TT}(\A_\infty)$ is obtained by permuting rows of $A_{\sigma.\TT}(\A_\infty)$. It follows that rank$A_{\sigma.\TT}(\A_\infty) = 2$ if and only if rank$A_{\sigma'.\TT}(\A_\infty) = 2$ and hence by Lemma \ref{lem:fin} this is equivalent to ${\rm Q}_\sigma \cap N_{s_1, \dots, s_6}$ $= {\rm Q}_{\sigma'} \cap N_{s_1, \dots, s_6}$, where $N_{s_1,\dots, s_6} = \{x = \displaystyle \sum_{\substack {I \subseteq [n] \\ |I| = 3}} \beta_I e_I \mid  \beta_I \neq 0 \mbox{ for any } I \subset \{s_1, \dots, s_6\}\}$. Since $N_{s_1, \dots, s_6}$ is dense open set in $\gamma(Gr(3, \CC^n))$, ${\rm Q}_\sigma \cap N_{s_1, \dots, s_6}$ $= {\rm Q}_{\sigma'} \cap N_{s_1, \dots, s_6}$ if and only if ${\rm Q}_\sigma$ $=  {\rm Q}_{\sigma'}$. Viceversa if ${\rm Q}_\sigma \cap N_{s_1, \dots, s_6}$ $= {\rm Q}_{\sigma'} \cap N_{s_1, \dots, s_6}$, then any generic arrangement $\A$ corresponding to a point in ${\rm Q}_\sigma \cap N_{s_1, \dots, s_6}$ corresponds to a point in ${\rm Q}_{\sigma'} \cap N_{s_1, \dots, s_6}$, that is rank$A_{\sigma.\TT}(\A_\infty) =2$ if and only if rank$A_{\sigma'.\TT}(\A_\infty) = 2$. It follows that  $A_{\sigma.\TT}(\A_\infty)$ and $A_{\sigma'.\TT}(\A_\infty)$ are submatrices of $A(\A_\infty)$ defined by the same three rows , i.e. $\sigma.L_l = \sigma'.L_{\tau(l)}$ for $l =1, 2, 3$. 
\end{proof}

\begin{defi}
For any 6 fixed indices $T=\{s_1,\cdots ,s_6\} \subset [n]$ the Pappus Variety is the hypersurface in $Gr(3, \CC^n)$ given by
$$
\mathcal{P}_T = \displaystyle \bigcup_{\sigma \in {\bd S_6}} {\rm Q}_{\sigma} \quad .
$$

\end{defi}
 
\bigskip

For $\sigma, \sigma' \in {\bd S_6}$ we define the equivalence relation $\sigma.\TT \sim \sigma'.\TT$ corresponding to ${\rm Q}_{\sigma} = {\rm Q}_{\sigma'}$ as following:
\begin{center}
$\sigma.\TT \sim \sigma'.\TT \Leftrightarrow \exists~\tau \in {\bd S_3}$ such that $\sigma.L_i \cap \sigma.L_j = \sigma'.L_{\tau(i)} \cap \sigma'.L_{\tau(j)}$($1\leq i<j\leq3$)  \quad.
\end{center}
We denote by $[\sigma]$ the equivalence class containing $\sigma.\TT$  and by ${\rm Q}_\sigma$ the corresponding quadric  (notice that $\sigma$ in notation ${\rm Q}_\sigma$ can be any representative of $[\sigma]$). By Lemma \ref{lem:coincide} $[\sigma]$ only depends on couples $L_i \cap L_j$ hence for each class $[\sigma]$ we can choice a representative $\tilde{\sigma}.\TT_0= \big\{ \{j_1, j_2, j_3, j_4\}$, $\{j_1, j_2, j_5, j_6\}$, $\{j_3, j_4, j_5, j_6\} \big\}$ such that  $j_1<j_2, j_3<j_4, j_5<j_6$ and $j_1<j_3<j_5$ and we can equivalently define 
$$
[\sigma] = \big\{ \{j_1, j_2\}, \{j_3, j_4\}, \{j_5, j_6\} \big\} \quad .
$$
Since the number of choices of $[\sigma]$ is $\dfrac {\binom{6}{2}\binom{4}{2}\binom{2}{2}}{3!} = 15$, Pappus variety is composed by 15 quadrics. Finally remark that $[\sigma] = \big\{ \{j_1, j_2\}, \{j_3, j_4\}, \{j_5, j_6\} \big\}$ and $[\sigma'] = \big\{ \{j_1', j_2'\}, \{j_3', j_4'\}, \{j_5', j_6'\} \big\}$ are ${\it disjoint}$, i.e. $[\sigma] \cap [\sigma'] = \emptyset$, if and only if $\{j_{2l-1}, j_{2l}\} \neq \{j'_{2l'-1}, j'_{2l'}\}$ for any $1 \leq l, l' \leq 3$.

\begin{defi}(Pappus configuration)
Let $[\sigma_1], [\sigma_2]$ and $[\sigma_3]$ be disjoint classes, a Pappus configuration is a set $\{ {\rm Q}_{\sigma_1}, {\rm Q}_{\sigma_2}, {\rm Q}_{\sigma_3} \}$ of quadrics in $Gr(3, \CC^n)$ such that
$$
{\rm Q}_{\sigma_{i_1}} \cap {\rm Q}_{\sigma_{i_2}} = \bigcap_{i=1}^3 {\rm Q}_{\sigma_i} \quad 
$$
for any $\{i_1, i_2\} \subset [3]$.
\end{defi}
\noindent
Quadrics ${\rm Q}_{\sigma_1}, {\rm Q}_{\sigma_2}, {\rm Q}_{\sigma_3}$ are said to be in Pappus configuration if $\{ {\rm Q}_{\sigma_1}, {\rm Q}_{\sigma_2}, {\rm Q}_{\sigma_3} \}$ is a Pappus configuration.

\begin{rem}\label{rem:disjointclass}
Fixed a class of good 6-partition  $[\sigma] = \big\{ \{j_1, j_2\}, \{j_3, j_4\}, \{j_5, j_6\} \big\}$, we shall count the number of disjoint classes. First let's count the number of classes $[\sigma'] = \big\{\{j_1', j_2'\}, \{j_3', j_4'\}, \{j_5', j_6'\} \big\}$ not disjoint and distinct from $[\sigma]$. Since $[\sigma]$ and $[\sigma']$ are distinct, only one couple $\{j_l', j_{l+1}' \}$ is contained in $[\sigma]$. Without lost of generality we can assume $\{j_l, j_{l+1}\} = \{j_1', j_2'\}$ ($l$ is either $1, 3$ or $5$) then pairs $\{j_3', j_4'\}$ and $\{j_5', j_6'\}$ are not in the same set, i.e. we have two possibilities:
\begin{center}
$\{j_3', j_5'\}$ and $\{j_4', j_6'\} \in [\sigma]$ \quad , \\
or \\
$\{j_3', j_6'\}$ and $\{j_4', j_5'\} \in [\sigma]$ \quad .
\end{center}
Hence there are $2 \cdot 3 + 1 = 7$ not disjoint classes from $[\sigma]$ and, since the number of all classes is $15$, we get that any fixed $[\sigma]$ admits exactly $15 - 7 = 8$ disjoint classes. 
\end{rem}
\section{Pappus's Theorem}\label{pappusthm}

In this section we restate Pappus's Theorem for quadrics in $Gr(3, \CC^n)$ by using notation introduced in previous section.
For a fixed class $[\sigma] = \big\{ \{j_1, j_2\}, \{j_3, j_4\}, \{j_5, j_6\} \big\}$ let's denote by $G_{[\sigma]}$ the free group generated by permutations of elements in each subset of $[\sigma]$, that is
$$
G_{[\sigma]} = \big\langle (j_{2l-1} \, j_{2l}) \in {\bd S_6} \, | \, l = 1, 2, 3 \, \big\rangle \quad ,
$$
and, for any class,  $[\sigma']$ let's define the set
$$
orbit_{G_{[\sigma]}}([\sigma']) = \big\{ \tau[\sigma'] \, | \, \tau \in G_{[\sigma]} \big\} \quad 
$$
where $\tau$ acts naturally as permutation of entries of each set in  $[\sigma']$.

\begin{rem} 
The action of $G_{[\sigma]}$ on class $[\sigma']$ disjoint from $[\sigma]$ is faithful. Indeed let $\tau, \tau' \in G_{[\sigma]}$ be such that $\tau[\sigma'] = \tau'[\sigma']$ then $\tau^{-1}\tau' [\sigma'] = [\sigma']$, i.e. $\tau^{-1}\tau' \in G_{[\sigma']}$. Thus we get $\tau^{-1}\tau' \in G_{[\sigma]} \cap G_{[\sigma']}$. Since $[\sigma]$ and $[\sigma']$ are disjoint, $G_{[\sigma]} \cap G_{[\sigma']} = \{ e \}$, i.e., $\tau = \tau'$. Remark that $|orbit_{G_{[\sigma]}}([\sigma'])| = |G_{[\sigma]}| = 8$ and $\tau[\sigma] = [\sigma]$ for any $\tau \in G_{[\sigma]}$.
\end{rem}

\begin{lem}\label{QT}
Let $[\sigma]$ and $[\sigma']$ be disjoint classes, then 
$$
orbit_{G_{[\sigma]}}([\sigma']) = \big\{ [\sigma''] \, | \,\, [\sigma] \cap [\sigma''] = \emptyset \big\} \quad .
$$
\end{lem}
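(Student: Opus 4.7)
The plan is to prove the equality by showing containment of the orbit in the set on the right, and then concluding equality from a cardinality count that matches both sides at $8$.

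First I would establish stability, namely that if $[\sigma'']$ is disjoint from $[\sigma]$ and $\tau \in G_{[\sigma]}$, then $\tau[\sigma'']$ is still disjoint from $[\sigma]$. Suppose for contradiction some pair $\{a,b\} \in \tau[\sigma'']$ lies in $[\sigma]$, say $\{a,b\} = \{j_{2l-1}, j_{2l}\}$. Writing $\{a,b\} = \{\tau(a'), \tau(b')\}$ for some $\{a',b'\} \in [\sigma'']$, the key observation is that every $\tau \in G_{[\sigma]}$ is a product of the commuting transpositions $(j_{2l-1}\, j_{2l})$, so $\tau$ permutes the pair $\{j_{2l-1}, j_{2l}\}$ setwise. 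Hence $\{a',b'\} = \tau^{-1}\{j_{2l-1}, j_{2l}\} = \{j_{2l-1}, j_{2l}\}$, which exhibits a pair in $[\sigma''] \cap [\sigma]$, contradicting disjointness. This gives the inclusion
\[
orbit_{G_{[\sigma]}}([\sigma']) \subseteq \big\{[\sigma''] \,\big|\, [\sigma]\cap[\sigma''] = \emptyset \big\}.
\]

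Next I would match cardinalities. On the left, the preceding remark says the action of $G_{[\sigma]}$ on a class disjoint from $[\sigma]$ is faithful, so
\[
|orbit_{G_{[\sigma]}}([\sigma'])| = |G_{[\sigma]}| = 2^3 = 8.
\]
On the right, Remark \ref{rem:disjointclass} already counts exactly $15 - 7 = 8$ classes disjoint from $[\sigma]$. Since the left-hand set is contained in the right-hand set and both have cardinality $8$, the two sets coincide.

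The main obstacle is really just the stability step; once that is in place the argument is a cardinality match using the two preceding remarks. The stability step itself reduces to the elementary but essential observation that the generators of $G_{[\sigma]}$ act as setwise stabilizers on each pair of $[\sigma]$, so no new coincidences between a pair of $\tau[\sigma'']$ and a pair of $[\sigma]$ can be produced by $\tau$ that were not already present in $[\sigma'']$.
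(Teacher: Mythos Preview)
Your proposal is correct and follows essentially the same approach as the paper: establish the inclusion $orbit_{G_{[\sigma]}}([\sigma']) \subseteq \{[\sigma''] \mid [\sigma]\cap[\sigma'']=\emptyset\}$ by observing that elements of $G_{[\sigma]}$ preserve each pair $\{j_{2l-1},j_{2l}\}$ setwise, and then conclude equality from the cardinality count $8=8$ using the preceding remark and Remark~\ref{rem:disjointclass}. Your contradiction phrasing of the stability step is slightly more explicit than the paper's direct argument, but the content is the same.
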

\begin{proof}
First we prove that $orbit_{G_{[\sigma]}}([\sigma']) \subset \big\{ [\sigma''] \, | \,\, [\sigma] \cap [\sigma''] = \emptyset \big\}$. 
Let $[\sigma] = \big\{ \{j_1, j_2\}, \{j_3, j_4\}, \{j_5, j_6\} \big\}$ and $[\sigma'] = \big\{ \{j_1', j_2'\}, \{j_3', j_4'\}, \{j_5', j_6'\} \big\}$ be disjoint, then $| \{j_{2l-1}, j_{2l}\} \cap \{ j_{2m-1}', j_{2m}'\} |\leq 1$. Since $\tau \in G_{[\sigma]}$ permutes only $j_{2l-1}$ and $j_{2l}$ then $\tau[\sigma'] \cap [\sigma] = \emptyset$, that is $\tau[\sigma']$ is disjoint from $[\sigma]$, i.e. $\tau[\sigma'] \in \big\{ [\sigma''] \, | \,\, [\sigma] \cap [\sigma''] = \emptyset \big\}$.
Since $G_{[\sigma]}$ is faithful, $|orbit_{G_{[\sigma]}}([\sigma'])| = 8$ and, by calculations in Remark \ref{rem:disjointclass}, $|\big\{ [\sigma''] \, | \,\, [\sigma]\cap [\sigma''] = \emptyset \big\}| = 8$, it follows that $orbit_{G_{[\sigma]}}([\sigma'])$ = $\big\{ [\sigma''] \, | \,\, [\sigma] \cap [\sigma''] = \emptyset \big\}$.
\end{proof}

The following theorem holds.

\begin{thm}\label{thm:pappus}(Pappus's Theorem)
For any disjoint classes $[\sigma]$ and $[\sigma']$, there exists a unique class $[\sigma'']$ disjoint from $[\sigma]$ and $[\sigma']$ such that $\{ {\rm Q}_{\sigma}$, ${\rm Q}_{\sigma'}, {\rm Q}_{\sigma''} \}$ is a Pappus configuration.
\end{thm}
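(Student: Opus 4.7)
The plan is to reduce the statement to the classical Pappus hexagon theorem by means of the geometric translation provided by Lemma \ref{lem:fin}. Setting $P_{ab} := H_{\infty,a}\cap H_{\infty,b}$ and associating to a class $[\tau]=\big\{\{k_1,k_2\},\{k_3,k_4\},\{k_5,k_6\}\big\}$ the triple of intersection points $\mathcal{T}_\tau:=\{P_{k_1k_2},P_{k_3k_4},P_{k_5k_6}\}\subset H_\infty\simeq\PP^2(\CC)$, Lemma \ref{lem:fin} says that, on the dense open set $N_{s_1,\dots,s_6}$, an arrangement $\mathcal{A}$ belongs to ${\rm Q}_\tau$ if and only if $\mathcal{T}_\tau$ is collinear. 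It is therefore enough to show that for any two disjoint classes $[\sigma],[\sigma']$ there is a unique class $[\sigma'']$ disjoint from both such that the collinearity of $\mathcal{T}_\sigma$ and $\mathcal{T}_{\sigma'}$ forces the collinearity of $\mathcal{T}_{\sigma''}$; the symmetric role of the three triples in classical Pappus then produces the required Pappus configuration equalities.

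For existence, the key combinatorial observation is that the graph $[\sigma]\cup[\sigma']$ on the vertex set $\{s_1,\dots,s_6\}$ is $2$-regular, simple (since $[\sigma]\cap[\sigma']=\emptyset$), and its cycles must alternate edges of the two perfect matchings, hence are of even length; it is therefore a single $6$-cycle. This gives that the six base points of $\mathcal{T}_\sigma\cup\mathcal{T}_{\sigma'}$ are all distinct and that each $H_{\infty,s_l}$ passes through exactly one point of $\mathcal{T}_\sigma$ and one point of $\mathcal{T}_{\sigma'}$. Hence, for $\mathcal{A}\in{\rm Q}_\sigma\cap{\rm Q}_{\sigma'}\cap N_{s_1,\dots,s_6}$, the two triples lie on two collinearity lines $\ell_\sigma,\ell_{\sigma'}$ and the six arrangement lines $H_{\infty,s_1},\dots,H_{\infty,s_6}$ play the role of the six crossing lines $AB',A'B,BC',B'C,AC',A'C$ of the classical Pappus hexagon. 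Classical Pappus then produces a third collinear triple $\{X,Y,Z\}$ whose index pairs, as a direct inspection of the $6$-cycle shows, form the class $[\sigma'']=w\cdot[\sigma']$, where $w=(j_1j_2)(j_3j_4)(j_5j_6)\in G_{[\sigma]}$ is the product of the three generators of $G_{[\sigma]}$. This class is disjoint from $[\sigma]$ because $w$ permutes the pairs of $[\sigma]$ among themselves, and from $[\sigma']$ because otherwise $w$ would swap the two elements of some pair of $[\sigma']$, forcing that pair to coincide with a pair of $[\sigma]$ and contradicting $[\sigma]\cap[\sigma']=\emptyset$. The inclusion ${\rm Q}_\sigma\cap{\rm Q}_{\sigma'}\subseteq{\rm Q}_{\sigma''}$ on $N_{s_1,\dots,s_6}$ now follows from classical Pappus and extends to all of $Gr(3,\CC^n)$ by density; the symmetric statement of classical Pappus gives the remaining two inclusions.

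For uniqueness, Lemma \ref{QT} together with Remark \ref{rem:disjointclass} shows that exactly four classes are disjoint from both $[\sigma]$ and $[\sigma']$. To exclude each candidate $[\sigma''']\neq[\sigma'']$ it suffices to exhibit a single $\mathcal{A}\in{\rm Q}_\sigma\cap{\rm Q}_{\sigma'}$ whose triple $\mathcal{T}_{\sigma'''}$ is not collinear; any generic realization of a Pappus hexagon in $H_\infty$ provides such a witness, since classical Pappus imposes no collinearity among the $15$ triples of intersection points beyond the Pappus third $\mathcal{T}_{\sigma''}$.

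The main obstacle will be the combinatorial identification $[\sigma''] = w\cdot[\sigma']$ in the existence step. This reduces to labelling the six vertices of the $6$-cycle $[\sigma]\cup[\sigma']$ in cyclic order, setting up the correspondence between the six arrangement lines $H_{\infty,s_l}$ and the six edges of the cycle, and verifying that the three Pappus crossings $X,Y,Z$ are exactly the intersections of the three pairs of lines indexed by the antipodal chords of the cycle. Once this matching is in place the theorem becomes a direct invocation of the classical Pappus hexagon theorem.
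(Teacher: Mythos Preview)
Your argument is correct, but it takes a route that is in a sense opposite to the paper's. The paper's proof is purely algebraic: for a fixed $[\omega_1]=\{\{j_1,j_2\},\{j_3,j_4\},\{j_5,j_6\}\}$ it writes down the defining polynomials $P_{\omega_1},P_{\omega_2},P_{\omega_3}$ of the three quadrics (with $[\omega_2]=\{\{j_1,j_3\},\{j_2,j_5\},\{j_4,j_6\}\}$ and $[\omega_3]=\{\{j_1,j_6\},\{j_2,j_4\},\{j_3,j_5\}\}$) and checks, using only the antisymmetry of the Pl\"ucker coordinates $\beta_{ijk}$, the linear relation $P_{\omega_2}-P_{\omega_1}=P_{\omega_3}$. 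This identity immediately yields the Pappus configuration, and the general case follows by transporting along the $G_{[\omega_1]}$-action via Lemma~\ref{QT}. The point is that this constitutes an \emph{independent} proof of Pappus's hexagon theorem---which is precisely the paper's stated aim. Your argument, by contrast, translates the quadric condition back to collinearity via Lemma~\ref{lem:fin}, identifies the $6$-cycle in $[\sigma]\cup[\sigma']$ with the Pappus hexagon, and then \emph{invokes} the classical theorem; so you are showing that the Grassmannian statement is equivalent to the classical one rather than re-proving it. What your approach buys is a transparent combinatorial identification of $[\sigma'']$ as $w\cdot[\sigma']$ for $w=(j_1j_2)(j_3j_4)(j_5j_6)$ (which indeed agrees with the paper's $[\omega_3]$), and a conceptual explanation of why exactly one third class appears. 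What the paper's approach buys is that Pappus's theorem becomes a two-line polynomial computation, with no appeal to projective geometry. Your uniqueness argument via generic witnesses is acceptable but sketchier than necessary; note that the three spurious candidates are exactly the $\tau[\sigma']$ for $\tau$ a product of two generators of $G_{[\sigma]}$, and one explicit Pappus hexagon suffices to rule all three out simultaneously.
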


\begin{proof}
Following example in section \ref{motivation}, for any class $[\omega_1] = \big\{\{j_1, j_2\},$ $\{j_3, j_4\}, \{j_5, j_6\} \big\}$ let's consider disjoint classes $[\omega_2] = \big\{ \{ j_1, j_3\}, \{j_2, j_5\}, \{j_4, j_6\} \big\}$ and  $[\omega_3] = \big\{ \{ j_1, j_6\}, \{j_2, j_4\}, \{j_3, j_5\} \big\}$.

The corresponding quadrics have equations:
\begin{equation*}
\begin{split}
&{\rm Q}_{\omega_1} : \beta_{j_1j_3j_4}\beta_{j_2j_5j_6} - \beta_{j_2j_3j_4}\beta_{j_1j_5j_6} = 0 \quad ,\\
&{\rm Q}_{\omega_2} : \beta_{j_4j_2j_5}\beta_{j_6j_1j_3} - \beta_{j_6j_2j_5}\beta_{j_4j_1j_3} = 0 \quad ,\\
&{\rm Q}_{\omega_3} : \beta_{j_5j_1j_6}\beta_{j_3j_2j_4} - \beta_{j_3j_1j_6}\beta_{j_5j_2j_4} = 0 \quad .
\end{split}
\end{equation*}
By definition of $\beta_{ijk}$, equations of ${\rm Q}_{\omega_2}$ and ${\rm Q}_{\omega_3}$ can equivalently be written as 
$$
{\rm Q}_{\omega_2} : \beta_{j_2j_4j_5}\beta_{j_1j_3j_6} + \beta_{j_2j_5j_6}\beta_{j_1j_3j_4} = 0 \quad ,
$$
$$
{\rm Q}_{\omega_3} : \beta_{j_1j_5j_6}\beta_{j_2j_3j_4} + \beta_{j_1j_3j_6}\beta_{j_2j_4j_5} = 0 \quad .
$$
If we denote left side of defining equations of ${\rm Q}_{\omega_i}$ by $P_{\omega_i}$ then
\begin{equation*}\label{eq:quadrics}
P_{\omega_2} - P_{\omega_1} = P_{\omega_3} \quad ,
\end{equation*}
that is zeros of any two polynomials $P_{\omega_{i_1}}, P_{\omega_{i_2}}$ are zeros of $P_{\omega_{i_3}}$, $\{i_1, i_2, i_3\} = \{1, 2, 3\}$. We get
$$
{\rm Q}_{\omega_{i_1}} \cap {\rm Q}_{\omega_{i_2}} = \bigcap_{i=1}^3 {\rm Q}_{\omega_i} 
$$
for any $\{i_1, i_2\} \subset [3]$, i.e. ${\rm Q}_{\omega_1}, {\rm Q}_{\omega_2}$ and ${\rm Q}_{\omega_3}$ are in Pappus configuration.\\
By Lemma \ref{QT}, since $[\omega_1] \cap [\omega_2] = \emptyset$, the set of disjoint classes from $[\omega_1]$ is given by
$$
 \big\{ [\sigma_0] \, | \,\, [\omega_1] \cap [\sigma_0]  = \emptyset \big\}=\big\{ \tau_0[\omega_2] \,| \, \, \tau_0 \in G_{[\omega_1]} \big\}  \quad .
$$
Then if $[\sigma']$ is disjoint from $[\omega_1]$, there exists a unique element $\tau \in G_{[\omega_1]}$ such that $[\sigma'] = \tau[\omega_2]$. That is, for a generic class $[\omega_1]$, any disjoint couple $([\omega_1], [\sigma'])$ is of the form $([\omega_1], \tau[\omega_2]) = (\tau[\omega_1], \tau[\omega_2])$ and we have

$$
{\rm Q}_{\omega_1} = {\rm Q}_{\tau \omega_1} : \beta_{\tau(j_1)\tau(j_3)\tau(j_4)}\beta_{\tau(j_2)\tau(j_5)\tau(j_6)} - \beta_{\tau(j_2)\tau(j_3)\tau(j_4)}\beta_{\tau(j_1)\tau(j_5)\tau(j_6)} = 0 \quad ,
$$
$$
{\rm Q}_{\sigma'} = {\rm Q}_{\tau \omega_2} : \beta_{\tau(j_4)\tau(j_2)\tau(j_5)}\beta_{\tau(j_6)\tau(j_1)\tau(j_3)} - \beta_{\tau(j_6)\tau(j_2)\tau(j_5)}\beta_{\tau(j_4)\tau(j_1)\tau(j_3)} = 0\quad .
$$
By antisymmetric property of indices of $\beta_{ijk}$, if we denote by $P_{\omega_1}$ and $P_{\sigma'}$ the left side of above equations, i.e. 
\begin{equation*}
\begin{split}
&P_{\omega_1} = \beta_{\tau(j_1)\tau(j_3)\tau(j_4)}\beta_{\tau(j_2)\tau(j_5)\tau(j_6)} - \beta_{\tau(j_2)\tau(j_3)\tau(j_4)}\beta_{\tau(j_1)\tau(j_5)\tau(j_6)} \quad ,\\
& P_{\sigma'} = \beta_{\tau(j_4)\tau(j_2)\tau(j_5)}\beta_{\tau(j_6)\tau(j_1)\tau(j_3)} - \beta_{\tau(j_6)\tau(j_2)\tau(j_5)}\beta_{\tau(j_4)\tau(j_1)\tau(j_3)} 
\end{split}
\end{equation*}
then 
$$
P_{\sigma''} := P_{\sigma'} - P_{\omega_1}  = \beta_{\tau(j_5)\tau(j_1)\tau(j_6)}\beta_{\tau(j_3)\tau(j_2)\tau(j_4)} - \beta_{\tau(j_3)\tau(j_1)\tau(j_6)}\beta_{\tau(j_5)\tau(j_2)\tau(j_4)} \quad 
$$
is the defining polynomial of ${\rm Q}_{\tau \omega_3}$. That is $[\sigma'']$ is uniquely determined by disjoint couple $([\omega_1], [\sigma'])$. 
\end{proof}
\noindent
From proof of Theorem \ref{thm:pappus} we get that for any class $[\omega_1] = \big\{\{j_1, j_2\},$ $\{j_3, j_4\}, \{j_5, j_6\} \big\}$ if we denote $[\omega_2] = \big\{ \{ j_1, j_3\}, \{j_2, j_5\}, \{j_4, j_6\} \big\}$ and  $[\omega_3] = \big\{ \{ j_1, j_6\}, \{j_2, j_4\}, \{j_3, j_5\} \big\}$, then all Pappus configurations are of the form $\{ {\rm Q}_{\tau \omega_1}, {\rm Q}_{\tau \omega_2}, {\rm Q}_{\tau \omega_3}\}$, $\tau \in G_{[\omega_1]}$ and the following Corollary holds.\\

\begin{cor}
The number of  Pappus configurations $\{ {\rm Q}_{\sigma}, {\rm Q}_{\sigma'}, {\rm Q}_{\sigma''}\}$ in $Gr(3, \CC^6)$ is $20$.
\end{cor}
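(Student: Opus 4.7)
The plan is a straightforward double-counting on unordered disjoint pairs of classes $\{[\sigma], [\sigma']\}$. I would begin by recording the two combinatorial inputs already established in the excerpt: first, as computed in Section \ref{pappusvariety}, there are exactly $15 = \binom{6}{2}\binom{4}{2}\binom{2}{2}/3!$ equivalence classes $[\sigma]$ of good $6$-partitions of $\{1,\dots,6\}$; second, by Remark \ref{rem:disjointclass}, each such class admits exactly $8$ classes disjoint from it.

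The first step is then to count unordered disjoint pairs. Summing $8$ over the $15$ classes counts each pair twice, so there are $\frac{15\cdot 8}{2}=60$ unordered disjoint pairs $\{[\sigma],[\sigma']\}$.

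Next I would invoke Theorem \ref{thm:pappus}: to each such pair it assigns a unique third class $[\sigma'']$, disjoint from both, so that $\{{\rm Q}_\sigma,{\rm Q}_{\sigma'},{\rm Q}_{\sigma''}\}$ is a Pappus configuration. This defines a surjective map from the $60$ unordered disjoint pairs onto the set of Pappus configurations in $Gr(3,\CC^6)$. Since a Pappus configuration $\{{\rm Q}_{\sigma_1},{\rm Q}_{\sigma_2},{\rm Q}_{\sigma_3}\}$ consists by definition of three pairwise disjoint classes, it contributes exactly $\binom{3}{2}=3$ unordered disjoint pairs to the count, and the uniqueness part of Theorem \ref{thm:pappus} guarantees that all three pairs are mapped back to the same configuration. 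Hence the map is $3$-to-$1$, and the total number of Pappus configurations equals $60/3=20$.

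The argument is elementary once the machinery is in place; the only point that genuinely needs care is verifying that the fibers of the map have size exactly $3$, which relies on the uniqueness assertion in Theorem \ref{thm:pappus} rather than merely its existence assertion.
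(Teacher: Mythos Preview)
Your proof is correct and is essentially the same double-counting argument as the paper's: the paper computes $15\times 8/3!=20$ directly by counting ordered disjoint pairs and dividing by the $3!$ ordered pairs arising from each Pappus triple, while you factor this as $(15\cdot 8/2)/3$ via unordered pairs. Your version is slightly more explicit about why the uniqueness clause of Theorem~\ref{thm:pappus} is needed to ensure the fibers have size exactly $3$, but the underlying idea is identical.
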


\begin{proof}
By Remark \ref{rem:disjointclass} the number of $[\sigma]$ is $15$ and by Lemma \ref{QT} each fixed class $[\sigma]$ admits $8$ disjoint classes. By Theorem \ref{thm:pappus} if $[\sigma]$ and $[\sigma']$ are fixed, $[\sigma'']$ is uniquely determined, thus the number of the sets $\big\{ [\sigma], [\sigma'], [\sigma''] \big\}$ is $15 \times 8 / 3! = 20$.
\end{proof}

\section{Intersections of quadrics}\label{intersection}

In this section we study intersections of quadrics in $Gr(3, \CC^n)$. In particular we are interested in intersections of sets
$$
{\rm Q}_{\sigma}^\circ = {\rm Q}_\sigma \cap \big \{ x = \displaystyle \sum_{\substack {I \subset [n] \\ |I| = 3}} \beta_I e_I  | \, \beta_I \neq 0, \mbox{ for any } I \subset \{s_1, \dots, s_6\} \big\} 
$$
of points in quadrics ${\rm Q}_\sigma$ that correspond to arrangements  of lines in $\PP^2(\CC)$ with subarrangement $\{H_{s_1}, \ldots, H_{s_6}\}$ generic. 
The following lemma holds.

\begin{lem}\label{prop:onlyzero}
If $[\sigma_1], [\sigma_2], [\sigma_3]$ are distinct and pairwise not disjoint classes then ${\rm Q}_{\sigma_1}^\circ \cap {\rm Q}_{\sigma_2}^\circ \cap {\rm Q}_{\sigma_3}^\circ = \emptyset$.
\end{lem}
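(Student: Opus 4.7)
The plan is to reduce to a rigid algebraic situation in which a single Plücker relation, combined with the non-vanishing hypothesis defining ${\rm Q}_\sigma^\circ$, supplies the contradiction.

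First I would prove the purely combinatorial fact that any three distinct, pairwise not disjoint classes $[\sigma_1], [\sigma_2], [\sigma_3]$ must share a common pair. For each $i<j$ pick $p_{ij}\in [\sigma_i]\cap[\sigma_j]$. If the three $p_{ij}$ were pairwise distinct, then $p_{12}$ and $p_{13}$, both being pairs in the partition $[\sigma_1]$, are disjoint as subsets of $\{s_1,\dots,s_6\}$; the same observation applied to $[\sigma_2]$ and $[\sigma_3]$ forces $\{p_{12}, p_{13}, p_{23}\}$ to be itself a partition of $\{s_1,\dots,s_6\}$, hence $[\sigma_1]=[\sigma_2]=[\sigma_3]$, contradicting distinctness. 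If exactly two of the $p_{ij}$ coincide, then two of the classes already agree on two of their three pairs and hence on the third (a partition into three pairs being determined by any two of them), again a contradiction. So $p_{12}=p_{13}=p_{23}$, and all three classes contain a common pair, which after relabeling I may take to be $\{s_1,s_2\}$.

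With this common pair fixed, the remaining pairs of each $[\sigma_i]$ partition $\{s_3,s_4,s_5,s_6\}$ into two pairs; since there are exactly three such partitions the three classes exhaust them, and
\begin{equation*}
\begin{aligned}
&[\sigma_1]=\big\{\{s_1,s_2\},\{s_3,s_4\},\{s_5,s_6\}\big\},\\
&[\sigma_2]=\big\{\{s_1,s_2\},\{s_3,s_5\},\{s_4,s_6\}\big\},\\
&[\sigma_3]=\big\{\{s_1,s_2\},\{s_3,s_6\},\{s_4,s_5\}\big\}.
\end{aligned}
\end{equation*}
Writing out the canonical defining polynomial $P_{\sigma_i}$ of each ${\rm Q}_{\sigma_i}$ and applying the Plücker relation (\ref{pluck}) with $(p_1,p_2)$ equal in turn to $(s_5,s_6)$, $(s_4,s_6)$ and $(s_4,s_5)$, each $P_{\sigma_i}$ rewrites (after antisymmetric reordering of indices) as a two-term expression in which the monomial $\beta_{s_1 s_2 s_3}\, \beta_{s_4 s_5 s_6}$ appears with a controlled sign. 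A final use of (\ref{pluck}) with $(p_1,p_2)=(s_1,s_2)$ then combines the three rewritings into the identity
\begin{equation*}
P_{\sigma_2} - P_{\sigma_1} - P_{\sigma_3} \;=\; 2\,\beta_{s_1 s_2 s_3}\,\beta_{s_4 s_5 s_6}.
\end{equation*}
On any point of ${\rm Q}_{\sigma_1}^\circ\cap{\rm Q}_{\sigma_2}^\circ\cap{\rm Q}_{\sigma_3}^\circ$ the left-hand side vanishes whereas both factors on the right are non-zero by the defining condition of ${\rm Q}_\sigma^\circ$; this contradiction yields the lemma.

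The step I expect to be the main obstacle is the sign book-keeping in the Plücker manipulations: reordering each triple of indices using antisymmetry and then assembling the three rewritings into the clean identity above requires care, though no single manipulation is conceptually deep. Geometrically the identity is the algebraic shadow of the classical projective fact that, in $H_\infty\simeq \PP^2(\CC)$, the three diagonals of the complete quadrilateral formed by $H_{\infty,s_3},\dots,H_{\infty,s_6}$ are never concurrent, so that the point $H_{\infty,s_1}\cap H_{\infty,s_2}$ cannot lie on all three diagonals at once. This geometric picture both motivates the argument and indicates which Plücker relations to pair up.
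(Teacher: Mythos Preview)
Your proof is correct and follows the same strategy as the paper: reduce to the situation where the three classes share a common pair, and then derive (modulo Pl\"ucker relations) the identity $P_{\sigma_2}-P_{\sigma_1}-P_{\sigma_3}=2\beta_{s_1s_2s_3}\beta_{s_4s_5s_6}$, which is incompatible with the non-vanishing condition defining ${\rm Q}^\circ_\sigma$. The only differences are organizational: you front-load the combinatorics to show directly that three distinct pairwise non-disjoint classes must share a pair (the paper instead treats the ``no common pair'' possibility as a separate case and then argues, in geometric language, that it forces $[\sigma_1]=[\sigma_2]=[\sigma_3]$), and you route the algebraic identity through four Pl\"ucker relations where the paper uses two, but the substance is identical.
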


\begin{proof}
If $[\sigma_1], [\sigma_2], [\sigma_3]$ are not disjoint then either
\begin{enumerate}
\renewcommand{\labelenumi}{(\arabic{enumi})}
\item $|[\sigma_1] \cap [\sigma_2] \cap [\sigma_3]| = 1$\label{enu:nonempty} \quad or
\item $|[\sigma_{i_1}] \cap [\sigma_{i_2}] |= 1 \,  (1\leq i_1 < i_2 \leq 3)$ and $[\sigma_1] \cap [\sigma_2] \cap [\sigma_3] = \emptyset$\label{enu:empty} \quad .
\end{enumerate}

\bigskip
(\ref{enu:nonempty}) Assume $[\sigma_1] \cap [\sigma_2] \cap [\sigma_3] = \{i_1, i_2\}$. Let $[\sigma_1] = \big\{ \{i_1,i_2\}, \{i_3,i_4\}, \{i_5,i_6\} \big\}, [\sigma_2] = \big\{ \{i_1,i_2\}, \{i_3,i_5\}, \{i_4,i_6\} \}$, and $[\sigma_3] = \big\{ \{i_1,i_2\}, \{i_3,i_6\}, \{i_4,i_5\} \big\}$ then we obtain the following quadrics 
\begin{center}
${\rm Q}_{\sigma_1} : \beta_{i_1i_3i_4}\beta_{i_2i_5i_6} - \beta_{i_2i_3i_4}\beta_{i_1i_5i_6} = 0$ \quad , \\
${\rm Q}_{\sigma_2} : \beta_{i_1i_3i_5}\beta_{i_2i_4i_6} - \beta_{i_2i_3i_5}\beta_{i_1i_4i_6} = 0$ \quad , \\
${\rm Q}_{\sigma_3} : \beta_{i_1i_3i_6}\beta_{i_2i_4i_5} - \beta_{i_2i_3i_6}\beta_{i_1i_4i_5} = 0$ \quad .
\end{center}
Any point $x \in {\rm Q}_{\sigma_1}^\circ \cap {\rm Q}_{\sigma_2}^\circ$ belongs to $Gr(3, \CC^n)$, that is $x$ satisfies Pl\"{u}cker relations in (\ref{pluck}). In particular $x \in Pl_1 \cap Pl_2$ where $Pl_1$ and $Pl_2$ are the quadrics:
\begin{center}
$Pl_1 : \beta_{i_1i_3i_2}\beta_{i_4i_5i_6} - \beta_{i_1i_3i_4}\beta_{i_2i_5i_6} + \beta_{i_1i_3i_5}\beta_{i_2i_4i_6} - \beta_{i_1i_3i_6}\beta_{i_2i_4i_5} = 0$ \quad , \\
$Pl_2 : \beta_{i_2i_3i_1}\beta_{i_4i_5i_6} - \beta_{i_2i_3i_4}\beta_{i_1i_5i_6} + \beta_{i_2i_3i_5}\beta_{i_1i_4i_6} - \beta_{i_2i_3i_6}\beta_{i_1i_4i_5} = 0$ \quad .
\end{center}
Notice that $Pl_1$ and $Pl_2$ can be obtained from equations in (\ref{pluck}) considering the 6-tuples $(p_1, p_2, q_0, q_1, q_2, q_3) = (i_1, i_3, i_2, i_4, i_5, i_6)$ and $(i_2, i_3, i_1, i_4, i_5, i_6)$ respectively. We get 

\begin{center}
${\rm Q}_{\sigma_2} - {\rm Q}_{\sigma_1} - Pl_1 + Pl_2 : \beta_{i_1i_3i_6}\beta_{i_2i_4i_5} - \beta_{i_2i_3i_6}\beta_{i_1i_4i_5} + 2(\beta_{i_1i_2i_3}\beta_{i_4i_5i_6}) = 0$ \quad .
\end{center} 
Since $\beta_{i_1i_2i_3} \neq 0$ and $\beta_{i_4i_5i_6} \neq 0$ then $\beta_{i_1i_2i_3}\beta_{i_4i_5i_6} \neq 0$
and hence $\beta_{i_1i_3i_6}\beta_{i_2i_4i_5} - \beta_{i_2i_3i_6}\beta_{i_1i_4i_5} \neq 0$, that is $x  \not\in {\rm Q}_{\sigma_3}^\circ$.

\bigskip
(\ref{enu:empty}) Assume $[\sigma_1] \cap [\sigma_2] = \{i_1,i_2\}$, $[\sigma_1] \cap [\sigma_3] = \{i_3,i_4\}$ and $[\sigma_2] \cap [\sigma_3] = \{i_5,i_6\}$ and name $P_1 = \{i_1,i_2\}$, $P_2 = \{i_3,i_4\}$, $P_3 = \{i_5,i_6\}$. To any point $x \in {\rm Q}_{\sigma_1}^\circ \cap {\rm Q}_{\sigma_2}^\circ \cap {\rm Q}_{\sigma_3}^\circ$ corresponds the existence of an arrangement with a generic sub-arrangement indexed by $\{i_1, \ldots , i_6\}$ which trace at infinity $\{H_{\infty,i_1}, \ldots , H_{\infty,i_6}\}$ satisfyies collinearity conditions as in Figure  \ref{fig3}. That is there exist couples $P_4 \in [\sigma_1], P_5 \in [\sigma_2]$ and $P_6 \in [\sigma_3]$ that correspond, respectively, to intersection points $p_4,p_5$ and $p_6$ of lines in $\{H_{\infty,i_1}, \ldots , H_{\infty,i_6}\}$ (see Figure  \ref{fig3}).

\begin{figure}[htbp]
 \begin{center}
  \includegraphics[width=70mm,bb =0 0 600 500]{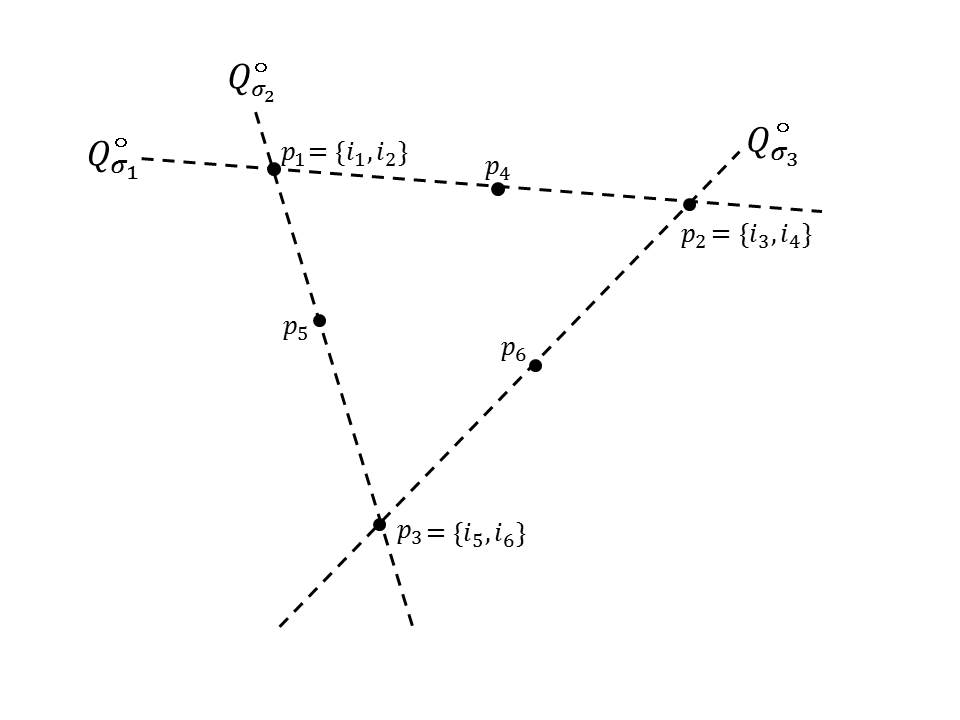}
 \end{center}
 \caption{Case (2) trace at infinity of $\A \in \bigcap_{i=1}^3 {\rm Q}^\circ_{\sigma_i}$, $\{i,j\}$ corresponds to $H_{\infty,i} \cap H_{\infty,j}$.}
 \label{fig3}
\end{figure}
\noindent
By definition of $P_1$, $P_2$ and $P_3$ we have
$$
P_3 = \{i_5, i_6\} \in \big(\{i_1, \dots, i_6\} \backslash P_1\big) \cap \big(\{i_1, \dots, i_6\} \backslash P_2\big) \quad .
$$
On the other hand, if $P_4$ is different from $P_1$ and $P_2$ in ${\rm Q}^\circ_{\sigma_1}$ then  $P_4=\big(\{i_1, \dots, i_6\} \backslash P_1\big)$ $\cap \big(\{i_1, \dots, i_6\} \backslash P_2\big)$. Thus we get $P_3 = P_4$ and, similarly, $P_5 = P_2$ and $P_6 = P_1$, that is ${\rm Q}^\circ_{\sigma_1} = {\rm Q}^\circ_{\sigma_2} = {\rm Q}^\circ_{\sigma_3}$ which contradict hypothesis.
\end{proof}

\begin{lem}\label{lem:alldisjoint}
For any three pairwise disjoint classes $[\sigma_1], [\sigma_2], [\sigma_3]$, either $\{ {\rm Q}_{\sigma_1}, {\rm Q}_{\sigma_2}, {\rm Q}_{\sigma_3} \}$ is a Pappus configuration or $\displaystyle\bigcap_{i=1}^3 {\rm Q}_{\sigma_i}^\circ = \emptyset$.
\end{lem}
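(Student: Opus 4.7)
The plan is to combine Theorem~\ref{thm:pappus} with Lemma~\ref{prop:onlyzero}. Applying Theorem~\ref{thm:pappus} to the disjoint pair $[\sigma_1], [\sigma_2]$ produces the unique class $[\sigma_3']$ disjoint from both such that $\{{\rm Q}_{\sigma_1}, {\rm Q}_{\sigma_2}, {\rm Q}_{\sigma_3'}\}$ is a Pappus configuration. If $[\sigma_3] = [\sigma_3']$, we are in the first alternative of the statement. Otherwise, I assume for contradiction that $x \in \bigcap_{i=1}^3 {\rm Q}^\circ_{\sigma_i}$; the Pappus property forces $x \in {\rm Q}^\circ_{\sigma_3'}$, so $x$ lies simultaneously in ${\rm Q}^\circ_{\sigma_3}$ and ${\rm Q}^\circ_{\sigma_3'}$ with $[\sigma_3]\neq[\sigma_3']$ two distinct classes both disjoint from $[\sigma_1]$ and $[\sigma_2]$.

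The key combinatorial step, which I verify from the orbit description $G_{[\sigma_1]} \cdot [\omega_2]$ given in the proof of Theorem~\ref{thm:pappus}, is that $[\sigma_3]$ and $[\sigma_3']$ cannot be disjoint: among the classes of $G_{[\sigma_1]} \cdot [\omega_2]$ that are also disjoint from $[\sigma_2]$, the Pappus partner $[\sigma_3']$ is uniquely distinguished by being disjoint from itself, while every other such class shares exactly one pair with $[\sigma_3']$. Letting $\{a,b\} = [\sigma_3] \cap [\sigma_3']$ and $[\sigma^*]$ be the unique third class of good $6$-partition containing the pair $\{a,b\}$, the triple $[\sigma_3], [\sigma_3'], [\sigma^*]$ is pairwise non-disjoint with common pair $\{a,b\}$, so Lemma~\ref{prop:onlyzero} case~(1) applies. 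Combined with $x \in {\rm Q}^\circ_{\sigma_3} \cap {\rm Q}^\circ_{\sigma_3'}$ this yields $x \notin {\rm Q}^\circ_{\sigma^*}$.

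To close the contradiction, I need $x \in {\rm Q}^\circ_{\sigma^*}$, and this is the main obstacle. My plan is to apply Theorem~\ref{thm:pappus} twice more, to the pairs $[\sigma_1], [\sigma_3]$ and $[\sigma_2], [\sigma_3]$, producing Pappus partners $[\sigma_2'']$ and $[\sigma_1''']$ with $x \in {\rm Q}^\circ_{\sigma_2''} \cap {\rm Q}^\circ_{\sigma_1'''}$. Tracking the canonical forms through the orbit pictures, the three shared pairs $[\sigma_3] \cap [\sigma_3']$, $[\sigma_2] \cap [\sigma_2'']$, $[\sigma_1] \cap [\sigma_1''']$ should reassemble into a partition of $\{s_1,\ldots,s_6\}$ which is precisely the class $[\sigma^*]$. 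Applying Lemma~\ref{prop:onlyzero} case~(1) to each of the three non-disjoint couples together with $[\sigma^*]$ then yields three Plücker-level expressions for $P_{\sigma^*}(x)$ of the shape $\pm 2\beta_I \beta_J$; the consistency of these three expressions on $N_{s_1,\ldots,s_6}$, taken together with a suitable Plücker relation on $Gr(3,\CC^n)$, should collapse to $P_{\sigma^*}(x) = 0$, contradicting the previous paragraph. The delicate point is the bookkeeping of signs and of which specific monomials $\beta_I\beta_J$ appear as error terms in each of the three applications of Lemma~\ref{prop:onlyzero}.
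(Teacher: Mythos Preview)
Your overall strategy diverges from the paper's and, as written, has a genuine gap in the final step.

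The paper does not introduce an auxiliary class $[\sigma^*]$ at all. Instead it applies Theorem~\ref{thm:pappus} to \emph{all three} disjoint pairs at once, producing Pappus partners $[\sigma_{12}],[\sigma_{13}],[\sigma_{23}]$ (distinct because the Pappus partner is unique). If these three are pairwise non-disjoint, Lemma~\ref{prop:onlyzero} gives $\bigcap_{l_1<l_2}{\rm Q}^\circ_{\sigma_{l_1l_2}}=\emptyset$, and since $\bigcap_{i=1}^3 {\rm Q}^\circ_{\sigma_i}$ is contained in that intersection one is done; otherwise one iterates on a new disjoint pair among the $[\sigma_{l_1l_2}]$ until pairwise non-disjoint classes appear. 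This avoids any algebraic manipulation beyond what is already packaged in Lemma~\ref{prop:onlyzero}.

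In your proposal the problematic point is the last paragraph. Each of your three applications of Lemma~\ref{prop:onlyzero} (to the triples containing $[\sigma^*]$ with common pair $\{a,b\}$, $[\sigma_2]\cap[\sigma_2'']$, and $[\sigma_1]\cap[\sigma_1''']$ respectively) yields $x\notin{\rm Q}^\circ_{\sigma^*}$, i.e.\ $P_{\sigma^*}(x)=\pm 2\beta_I\beta_J\neq 0$ on $N_{s_1,\dots,s_6}$. All three conclusions point the \emph{same} way; they do not contradict one another, and you have not supplied the ``suitable Pl\"ucker relation'' that would force these three nonzero error terms to cancel and give $P_{\sigma^*}(x)=0$. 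As the proposal stands, the intended contradiction is asserted (``should collapse'') but not established, and the sign/monomial bookkeeping you flag as delicate is in fact the entire content of that step. Until that identity is written down and checked, the argument is incomplete. The paper's route sidesteps this by never needing $x\in{\rm Q}^\circ_{\sigma^*}$; it feeds the three Pappus partners directly into Lemma~\ref{prop:onlyzero}.
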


\begin{proof}
By Pappus's Theorem, for any two disjoint classes $[\sigma_i], [\sigma_j]$, there exists $[\sigma_{ij}]$ such that $\{ {\rm Q}_{\sigma_i}, {\rm Q}_{\sigma_j}, {\rm Q}_{\sigma_{ij}} \}$ is Pappus configuration. If $[\sigma_{ij}] = [\sigma_k]$ for some $k \in [3]$, then $\{ {\rm Q}_{\sigma_1}, {\rm Q}_{\sigma_2}, {\rm Q}_{\sigma_3} \}$ is a Pappus configuration. Thus assume all $[\sigma_{ij}] \neq [\sigma_k]$ for any $k = 1, 2, 3$. Moreover $[\sigma_{12}], [\sigma_{13}], [\sigma_{23}]$ are distinct since if $[\sigma_{ij}] = [\sigma_{ik}]$ then $[\sigma_j] = [\sigma_k]$.\\
If $[\sigma_{12}] \cap [\sigma_{13}] \neq \emptyset$, $[\sigma_{12}] \cap [\sigma_{23}] \neq \emptyset$ and $[\sigma_{13}] \cap [\sigma_{23}] \neq \emptyset$, then $\bigcap_{1 \leq l_1 < l_2 \leq 3} {\rm Q}_{\sigma_{l_1l_2}}^\circ = \emptyset$ by Lemma \ref{prop:onlyzero} and $\displaystyle\bigcap_{i=1}^3 {\rm Q}_{\sigma_i}^\circ = (\displaystyle\bigcap_{i=1}^3 {\rm Q}_{\sigma_i}^\circ) \cap (\displaystyle\bigcap_{1 \leq l_1 < l_2 \leq 3} {\rm Q}_{\sigma_{l_1l_2}}^\circ) = \emptyset$.\\
Otherwise assume $[\sigma_{12}] \cap [\sigma_{13}] = \emptyset$, we get a new Pappus configuration. Since the number of disjoint classes is finite, iterating the process, we will eventually get 3 classes $[\sigma_{l_1}], [\sigma_{l_2}]$, $[\sigma_{l_3}]$ pairwise not disjoint and  $\displaystyle\bigcap_{i=1}^3 {\rm Q}_{\sigma_i}^\circ = (\displaystyle\bigcap_{i=1}^3 {\rm Q}_{\sigma_i}^\circ) \cap {\rm Q}_{\sigma_{l_1}}^\circ \cap {\rm Q}_{\sigma_{l_2}}^\circ \cap  {\rm Q}_{\sigma_{l_3}}^\circ = \emptyset$.
\end{proof}

\begin{lem}
If $[\sigma_1], [\sigma_2], [\sigma_3]$ are distinct classes such that $[\sigma_1] \cap [\sigma_2] \neq \emptyset$ and $[\sigma_i] \cap [\sigma_3] = \emptyset$ for $i = 1, 2$, then $\displaystyle\bigcap_{i=1}^3 {\rm Q}_{\sigma_i}^\circ = \emptyset$.
\end{lem}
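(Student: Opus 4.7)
The plan is to reduce the claim to Lemma~\ref{lem:alldisjoint} by introducing an auxiliary class via Pappus's Theorem. Since $[\sigma_1]$ and $[\sigma_3]$ are disjoint, Theorem~\ref{thm:pappus} yields a unique class $[\sigma_{13}]$ for which $\{{\rm Q}_{\sigma_1},{\rm Q}_{\sigma_3},{\rm Q}_{\sigma_{13}}\}$ is a Pappus configuration; in particular ${\rm Q}_{\sigma_1}\cap {\rm Q}_{\sigma_3}\subseteq {\rm Q}_{\sigma_{13}}$, so
\[
\bigcap_{i=1}^3 {\rm Q}_{\sigma_i}^\circ \;\subseteq\; {\rm Q}_{\sigma_2}^\circ\cap {\rm Q}_{\sigma_3}^\circ\cap {\rm Q}_{\sigma_{13}}^\circ.
\]
It therefore suffices to show that $\{[\sigma_2],[\sigma_3],[\sigma_{13}]\}$ is pairwise disjoint and not itself a Pappus configuration, so that Lemma~\ref{lem:alldisjoint} forces the right-hand side to be empty.

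The heart of the argument, and the main obstacle, is the combinatorial claim $[\sigma_2]\cap[\sigma_{13}]=\emptyset$. From the explicit description of Pappus triples in the proof of Theorem~\ref{thm:pappus}, writing $[\sigma_1]=\{\{a,b\},\{c,d\},\{e,f\}\}$ and $\nu_1:=(a\,b)(c\,d)(e\,f)\in G_{[\sigma_1]}$, the canonical companions $[\omega_2],[\omega_3]$ of $[\sigma_1]$ satisfy $\nu_1\cdot[\omega_2]=[\omega_3]$ by a direct check; since $G_{[\sigma_1]}\cong(\Z/2)^3$ is abelian, this identity yields $[\sigma_{13}]=\nu_1\cdot[\sigma_3]$. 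Now $[\sigma_1]$ and $[\sigma_2]$ are distinct but intersect, so they share exactly one pair $P_0$, which $\nu_1$ fixes setwise; on the complementary $4$-set, the remaining two pairs of $[\sigma_2]$ form a $2$-partition different from that of the remaining two pairs of $[\sigma_1]$, and a direct inspection of the only three $2$-partitions of a $4$-set shows that ``swap within each pair of one partition'' preserves every other $2$-partition setwise. Hence $\nu_1\cdot[\sigma_2]=[\sigma_2]$, and so
\[
[\sigma_2]\cap[\sigma_{13}] \;=\; \nu_1\cdot[\sigma_2]\cap\nu_1\cdot[\sigma_3] \;=\; \nu_1\cdot\bigl([\sigma_2]\cap[\sigma_3]\bigr) \;=\; \emptyset
\]
using the hypothesis $[\sigma_2]\cap[\sigma_3]=\emptyset$.

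With pairwise disjointness of $\{[\sigma_2],[\sigma_3],[\sigma_{13}]\}$ established (the remaining relation $[\sigma_3]\cap[\sigma_{13}]=\emptyset$ is built into the Pappus configuration from the first step), suppose for contradiction that this triple were itself a Pappus configuration. Then the uniqueness part of Theorem~\ref{thm:pappus} applied to the disjoint pair $([\sigma_3],[\sigma_{13}])$, whose Pappus partner is $[\sigma_1]$ by construction, would force $[\sigma_2]=[\sigma_1]$, contradicting distinctness. Hence $\{[\sigma_2],[\sigma_3],[\sigma_{13}]\}$ is not a Pappus configuration, and Lemma~\ref{lem:alldisjoint} yields ${\rm Q}_{\sigma_2}^\circ\cap {\rm Q}_{\sigma_3}^\circ\cap {\rm Q}_{\sigma_{13}}^\circ=\emptyset$, as required.
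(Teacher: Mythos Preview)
Your proof is correct and takes a genuinely different route from the paper's.

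The paper introduces \emph{two} auxiliary Pappus partners, $[\sigma_4]$ for the pair $([\sigma_1],[\sigma_3])$ and $[\sigma_5]$ for $([\sigma_2],[\sigma_3])$, and then argues that unless $[\sigma_1]\cap[\sigma_5]$, $[\sigma_2]\cap[\sigma_4]$, $[\sigma_4]\cap[\sigma_5]$ are all nonempty, Lemma~\ref{lem:alldisjoint} already applies; in the remaining case it invokes the geometric interpretation of points in ${\rm Q}_\sigma^\circ$ as collinearity conditions on the trace at infinity, and reads off the possible forms of $[\sigma_4]$ and $[\sigma_5]$ from two pictures to reach a contradiction. Your argument introduces only the single partner $[\sigma_{13}]$ and establishes the key disjointness $[\sigma_2]\cap[\sigma_{13}]=\emptyset$ purely combinatorially, via the identity $[\sigma_{13}]=\nu_1\cdot[\sigma_3]$ (deduced from $\nu_1[\omega_2]=[\omega_3]$ together with commutativity of $G_{[\sigma_1]}\cong(\Z/2)^3$) and the pleasant observation that the full involution $\nu_1$ fixes every $2$-partition of the complementary $4$-set, hence fixes $[\sigma_2]$. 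This reduces the statement directly to Lemma~\ref{lem:alldisjoint} with no case analysis and no appeal to the geometry of $\A_\infty$, giving a shorter and more self-contained proof; the paper's approach, on the other hand, keeps the connection to line configurations in $\PP^2$ visible throughout.
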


\begin{proof}
Since $[\sigma_1], [\sigma_3]$ and $[\sigma_2], [\sigma_3]$ are disjoint, there exist $[\sigma_4]$ and $[\sigma_5]$ such that $\{{\rm Q}_{\sigma_1}, {\rm Q}_{\sigma_3}, {\rm Q}_{\sigma_4}\}$ and $\{{\rm Q}_{\sigma_2}, {\rm Q}_{\sigma_3}, {\rm Q}_{\sigma_5}\}$ are Pappus configurations and
\begin{center}
$[\sigma_1] \cap [\sigma_5] \neq \emptyset,$ $[\sigma_2] \cap [\sigma_4] \neq \emptyset,$
$[\sigma_4] \cap [\sigma_5] \neq \emptyset$ \quad .
\end{center}
Indeed if one of them is empty, we obtain $3$ disjoin classes not in Pappus configuration and by Lemma \ref{lem:alldisjoint}, it follows $\displaystyle\bigcap_{i=1}^3 {\rm Q}_{\sigma_i}^\circ = \displaystyle\bigcap_{i=1}^5 {\rm Q}_{\sigma_i}^\circ = \emptyset$.
Since $[\sigma_1] \cap [\sigma_2] \neq \emptyset$, we can assume $\{ i_1, i_2 \} = [\sigma_1] \cap [\sigma_2]$ and we can set 
\begin{center}
$[\sigma_1] = \big\{ \{i_1, i_2\}, \{i_3, i_4\}, \{i_5, i_6\} \big\}$, $[\sigma_2] = \big\{ \{i_1, i_2\}, \{i'_3, i'_4\}, \{i'_5, i'_6\} \big\}$, $[\sigma_3] = \big\{ \{j_1, j_2\}, \{j_3, j_4\}, \{j_5, j_6\} \big\}$ \quad .
\end{center}
To any point $x \in \displaystyle\bigcap_{i=1}^3 {\rm Q}_{\sigma_i}^\circ \neq \emptyset$ corresponds an arrangement $\A$ with generic subarrangement $\{H_{i_1}, \ldots , H_{i_6}\}$ with trace at infinity $\{H_{\infty,i_1}, \ldots , H_{\infty,i_6}\}$ intersecting as in Figures \ref{fig:pappus_sigma1} and \ref{fig:pappus_sigma2} ( up to rename).
\begin{figure}[htbp]
 \begin{center}
  \includegraphics[width=70mm,bb =0 0 600 500]{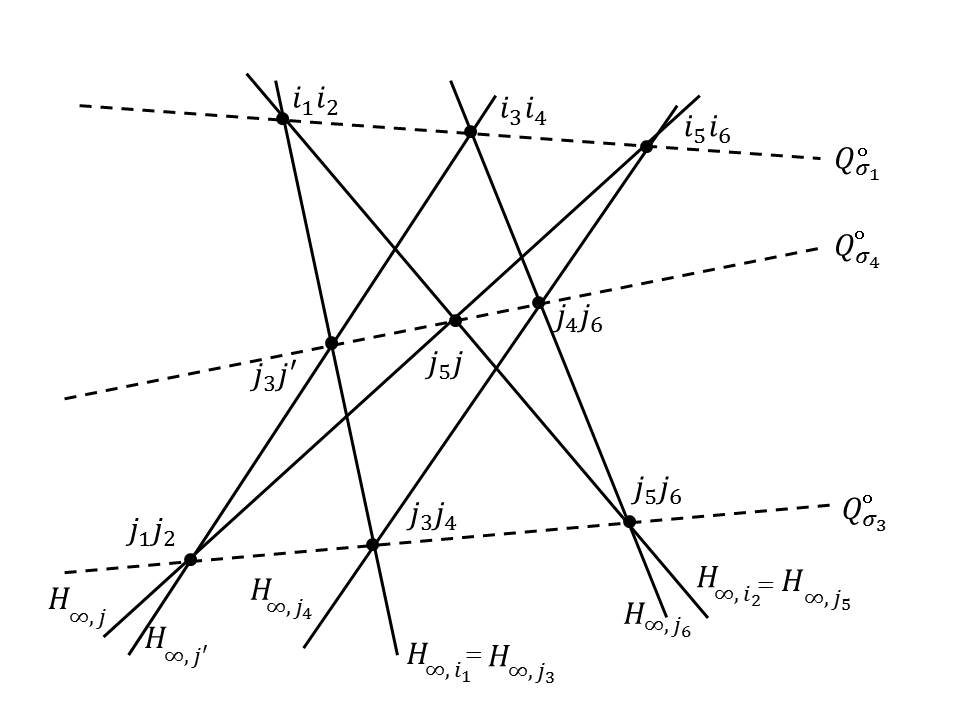}
 \end{center}
 \caption{each $j, j'$ is $j_1$ or $j_2$. }
 \label{fig:pappus_sigma1}
\end{figure}
It follows that $\{j_4, j_6\} \in [\sigma_4]$ and since $\{j_3, j_5\} = \{i_1, i_2\} \in [\sigma_1]$ and $[\sigma_1] \cap [\sigma_4] = \emptyset$ (see Figure \ref{fig:pappus_sigma1}), there are two possibilities:
\begin{center}
$[\sigma_4] = \big\{ \{j_4, j_6\}, \{j_1, j_3\}, \{j_2, j_5\} \big\}$ \\
or \\
$[\sigma_4] = \big\{ \{j_4, j_6\}, \{j_1, j_5\}, \{j_2, j_3\} \big\}$.
\end{center}
Analogously (see Figure \ref{fig:pappus_sigma2}) class $[\sigma_5]$ is of the form
\begin{center}
$[\sigma_5] = \big\{ \{j_4, j_6\}, \{j_1, j_3\}, \{j_2, j_5\} \big\}$ \\
or \\
$[\sigma_5] = \big\{ \{j_4, j_6\}, \{j_1, j_5\}, \{j_2, j_3\} \big\}$.
\end{center}
\begin{figure}[htbp]
 \begin{center}
  \includegraphics[width=70mm,bb =0 0 600 500]{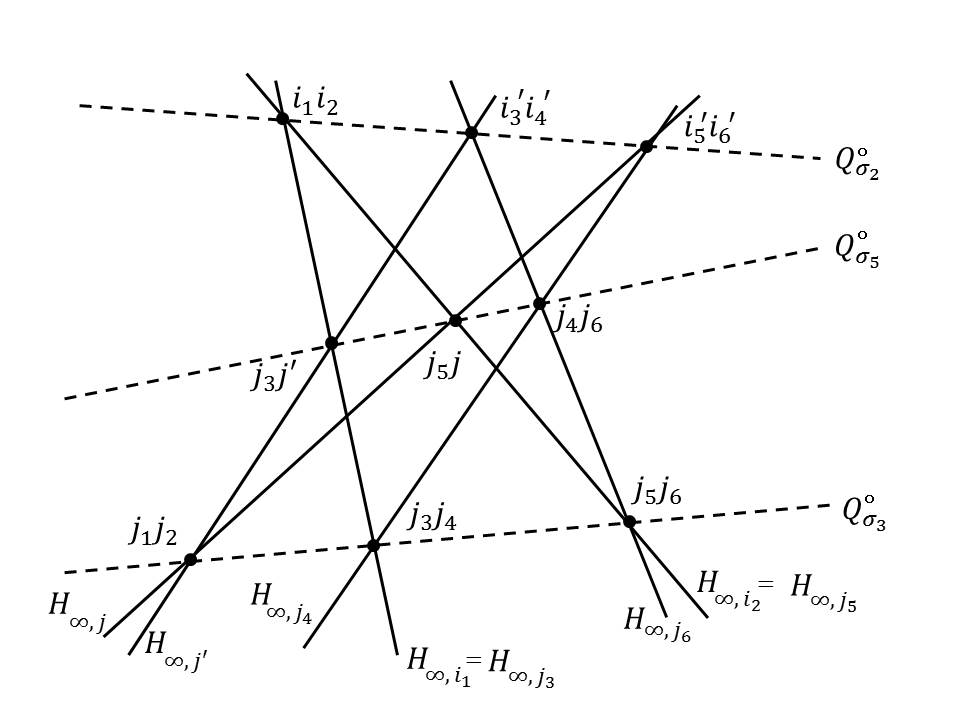}
 \end{center}
 \caption{each $j, j'$ is $j_1$ or $j_2$. }
 \label{fig:pappus_sigma2}
\end{figure}
Since $[\sigma_1] \cap  [\sigma_5] \neq \emptyset$ and $[\sigma_5] \not\ni \{j_3, j_5\} = \{i_1, i_2\}$, we deduce that $\{j_4, j_6\} = \{i_3, i_4\}$ or $\{i_5, i_6\}$, which is not possible by $[\sigma_1] \cap [\sigma_4] = \emptyset$. Hence $\displaystyle\bigcap_{i=1}^3 {\rm Q}_{\sigma_i}^\circ = \emptyset$.
\end{proof}
\noindent
Notice that the Hesse arrangement in $\PP^2(\CC)$ (see Figure \ref{fig4}) can be regarded as a generic arrangement of $6$ lines which intersection points satisfy $6$ collinearity conditions.
\begin{figure}[htbp]
 \begin{center}
  \includegraphics[width=70mm,bb =0 0 600 500]{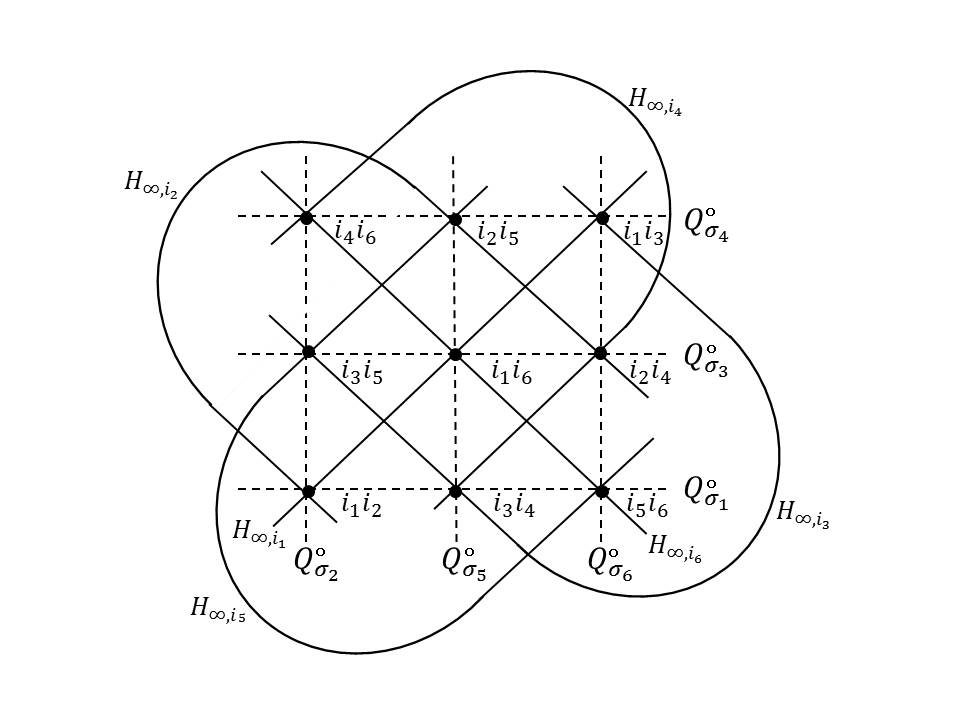}
 \end{center}
 \caption{Hesse arrangement with $H_{\infty i_1}, \dots H_{\infty i_6}$ and $\bigcap_{i=1}^6 {\rm Q}^\circ_{\sigma_i} \neq \emptyset$.}
 \label{fig4}
\end{figure}
\begin{defi}(Hesse configuration)
Let $[\sigma_i], 1 \leq i \leq 6$ be distinct classes, we call Hesse configuration a set $\{ {\rm Q}_{\sigma_1}, \dots, {\rm Q}_{\sigma_6} \}$ of quadrics in $Gr(3, \CC^n)$ such that there exist disjoint sets $I, J \subset [6]$, $|I| = |J| = 3$ such that $\{ {\rm Q}_{\sigma_i} \}_{i \in I}, \{ {\rm Q}_{\sigma_j}\}_{j \in J}$ are Pappus configurations and $[\sigma_i] \cap [\sigma_j] \neq \emptyset$ for any $i \in I$, $j \in J$.
\end{defi}
With above notations, the following classification Theorem holds.

\begin{thm}\label{thm:conjecture}
For any choice of indices $\{s_1,\ldots, s_6\} \subset [n]$ sets ${\rm Q}_{\sigma}^\circ,~\sigma \in \bd S_6$,  in Grassmannian $Gr(3, \CC^n)$
intersect as follows.
\begin{enumerate}
\renewcommand{\labelenumi}{(\arabic{enumi})}
\item For any disjoint classes $[\sigma_1]$ and $[\sigma_2]$, there exist $[\sigma_3], \dots, [\sigma_6]$ such that $\{{\rm Q}_{\sigma_1}, \ldots ,{\rm Q}_{\sigma_6}\}$ is an Hesse configuration for $I = \{1, 2, 3\}, J = \{4, 5, 6\}$ and 
\begin{equation*}
\displaystyle\bigcap_{i=1}^2 {\rm Q}_{\sigma_i}^\circ = \displaystyle\bigcap_{i=1}^3 {\rm Q}_{\sigma_i}^\circ \supsetneq \displaystyle\bigcap_{i=1}^4 {\rm Q}_{\sigma_i}^\circ \supsetneq \displaystyle\bigcap_{i=1}^6 {\rm Q}_{\sigma_i}^\circ \supsetneq \emptyset \quad .
\end{equation*} 
\item For any not disjoint classes $[\sigma_1]$ and $[\sigma_2]$, there exist $[\sigma_3], \dots, [\sigma_6]$ such that $\{{\rm Q}_{\sigma_1}, \ldots ,{\rm Q}_{\sigma_6}\}$ is an Hesse configuration for $I = \{1, 3, 4\}, J = \{2, 5, 6\}$ and 
\begin{equation*}
\displaystyle\bigcap_{i=1}^2 {\rm Q}_{\sigma_i}^\circ \supsetneq \displaystyle\bigcap_{i=1}^3 {\rm Q}_{\sigma_i}^\circ = \displaystyle\bigcap_{i=1}^4 {\rm Q}_{\sigma_i}^\circ \supsetneq \displaystyle\bigcap_{i=1}^6 {\rm Q}_{\sigma_i}^\circ \supsetneq \emptyset \quad .
\end{equation*}
\end{enumerate}

All other intersections are empty.

\end{thm}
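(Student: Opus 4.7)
The plan is to proceed by case analysis on whether $[\sigma_1]$ and $[\sigma_2]$ are disjoint (case (1)) or not (case (2)), constructing in each case the Hesse configuration that completes $\{{\rm Q}_{\sigma_1},{\rm Q}_{\sigma_2}\}$, then verifying the chain of equalities and strict inclusions, exhibiting a non-empty point, and finally ruling out all remaining intersections using the three lemmas of this section.

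In case (1), Pappus's Theorem \ref{thm:pappus} applied to the disjoint pair $([\sigma_1],[\sigma_2])$ yields the unique $[\sigma_3]$ disjoint from both such that $\{{\rm Q}_{\sigma_1},{\rm Q}_{\sigma_2},{\rm Q}_{\sigma_3}\}$ is a Pappus configuration; this immediately gives $\bigcap_{i=1}^2 {\rm Q}_{\sigma_i}^\circ = \bigcap_{i=1}^3 {\rm Q}_{\sigma_i}^\circ$. A combinatorial count analogous to Remark \ref{rem:disjointclass} produces a class $[\sigma_4]$ not disjoint from each of $[\sigma_1],[\sigma_2],[\sigma_3]$; iteratively applying Pappus's Theorem to disjoint pairs inside a second Pappus triple then produces $[\sigma_5],[\sigma_6]$ so that the six classes together form a Hesse configuration with $I=\{1,2,3\}$, $J=\{4,5,6\}$. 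Case (2) is handled analogously: choose $[\sigma_3]$ disjoint from $[\sigma_1]$ but not disjoint from $[\sigma_2]$, apply Pappus's Theorem to $([\sigma_1],[\sigma_3])$ to obtain $[\sigma_4]$ (giving the equality $\bigcap_{i=1}^3 {\rm Q}_{\sigma_i}^\circ = \bigcap_{i=1}^4 {\rm Q}_{\sigma_i}^\circ$ via the Pappus configuration $\{{\rm Q}_{\sigma_1},{\rm Q}_{\sigma_3},{\rm Q}_{\sigma_4}\}$), and repeat the procedure on the $[\sigma_2]$-side to produce $[\sigma_5],[\sigma_6]$ satisfying the Hesse cross-intersection conditions $[\sigma_i]\cap[\sigma_j]\neq\emptyset$ for $i\in I=\{1,3,4\}$, $j\in J=\{2,5,6\}$.

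Non-emptiness of $\bigcap_{i=1}^6 {\rm Q}_{\sigma_i}^\circ$ in both cases is witnessed by the Hesse arrangement (Figure \ref{fig4}), which is a generic arrangement of six lines whose trace at infinity simultaneously realizes the six collinearity conditions encoded by the Hesse configuration. The strict inclusions $\bigcap_{i=1}^3 \supsetneq \bigcap_{i=1}^4$ in case (1), $\bigcap_{i=1}^2 \supsetneq \bigcap_{i=1}^3$ in case (2), and $\bigcap_{i=1}^4 \supsetneq \bigcap_{i=1}^6$ in both cases follow by exhibiting generic arrangements satisfying precisely the stipulated subset of collinearity conditions but not the additional ones, existing by a dimension count on the open set of generic arrangements since each additional independent collinearity condition cuts the dimension by one. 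For the ``all other intersections are empty'' claim, it suffices to show that any collection of classes not contained as a subset in one of the two Hesse configuration patterns must contain a sub-triple falling into one of the three lemmas: either three pairwise not-disjoint classes (empty by Lemma \ref{prop:onlyzero}), three pairwise disjoint classes not in Pappus configuration (empty by Lemma \ref{lem:alldisjoint}), or a not-disjoint pair extended by a third class disjoint from both (empty by the third lemma of this section). This combinatorial check is straightforward given the rigidity of the Hesse configuration.

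I expect the main obstacle to be the rigorous verification of the strict inclusions, since this requires proving independence of the successive collinearity conditions cut out by the quadrics; while intuitively clear from a dimension count, making this precise may require explicit parametric constructions of line arrangements of prescribed combinatorial type or a careful analysis of the codimension of the relevant determinantal subvarieties of $Gr(3,\CC^n)$. A secondary difficulty is the combinatorial enumeration in case (2), where the constraint $[\sigma_1]\cap[\sigma_2]\neq\emptyset$ restricts the admissible completions and must be consistently reconciled with the uniqueness of the Pappus configuration produced by Theorem \ref{thm:pappus}.
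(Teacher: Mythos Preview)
The paper does not supply a proof of this theorem: after the three lemmas of Section~\ref{intersection} and the definition of Hesse configuration, the statement is given as a classification result with no accompanying proof environment. So there is no line-by-line comparison to make; rather, your proposal is an attempt to fill in what the paper leaves implicit.

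Your overall architecture is the one the paper evidently intends. The equalities in the chains come from Pappus's Theorem~\ref{thm:pappus}; the ``all other intersections are empty'' clause is exactly what Lemmas~\ref{prop:onlyzero}, \ref{lem:alldisjoint} and the unlabelled third lemma are set up to deliver; and non-emptiness of the six-fold intersection is meant to be witnessed by the Hesse arrangement of Figure~\ref{fig4}. On these points your proposal matches the paper's implicit strategy.

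Where your proposal is genuinely incomplete is in the two places you yourself flag. First, the construction of $[\sigma_4],[\sigma_5],[\sigma_6]$ completing a given Pappus triple to a Hesse configuration is asserted rather than carried out: you need to check that a class meeting all three of $[\sigma_1],[\sigma_2],[\sigma_3]$ exists (it does---pick one pair from each), and then that the Pappus completion of two such classes again meets all three of the original ones, which is a small but not entirely trivial combinatorial verification the paper also omits. Second, the strict inclusions require exhibiting arrangements realising exactly $k$ of the collinearity conditions for $k=3,4$ (case~(1)) or $k=2,4$ (case~(2)); your dimension-count heuristic is the right intuition, but neither you nor the paper supplies the explicit constructions or the irreducibility/codimension argument that would make it rigorous. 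These are real gaps, not in your strategy relative to the paper, but in the paper itself.
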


\begin{rem}
Notice that, since Hesse configuration only exists in complex case, in $Gr(3, \CC^n)$ we can find $6$ quadrics $\{{\rm Q}_{\sigma_1}, \dots, {\rm Q}_{\sigma_6}\}$ such that
$$
\displaystyle\bigcap_{i=1}^6 {\rm Q}_{\sigma_i}^\circ \supsetneq \emptyset \quad ,
$$
while in $Gr(3, \RR^n)$, 
$$
\displaystyle\bigcap_{j\in J \subset [6], \, |J|>4} {\rm Q}_{\sigma_j}^\circ = \emptyset \quad .
$$
It follows that in real case, for any choice of indices $\{s_1, \ldots, s_6\} \subset [n]$, we have at most 4 collinearity conditions (see Figure \ref{pap_config2}) corresponding to $15$ hyperplanes in Discriminantal arrangement with 4 multiplicity 3 intersections in codimension 2 (see Figure \ref{codim2_2}). While in complex case Hesse configuration (see Figure \ref{fig4}) gives rise to a Discriminantal arrangement containing $15$ hyperplanes intersecting in 6 multiplicity 3 spaces in codimension 2. \\
This remark allows a better understanding of differences in combinatorics of Discriminantal arrangement in real and complex case. Moreover those observations suggest that some special configuration of lines in projective plane intersecting in a big number of triple points could be understood by studying Discriminantal arrangements with maximum number of multiplicity 3 intersections in codimension 2.
\begin{figure}[htbp]
  \includegraphics[width=40mm,bb =0 50 450 500]{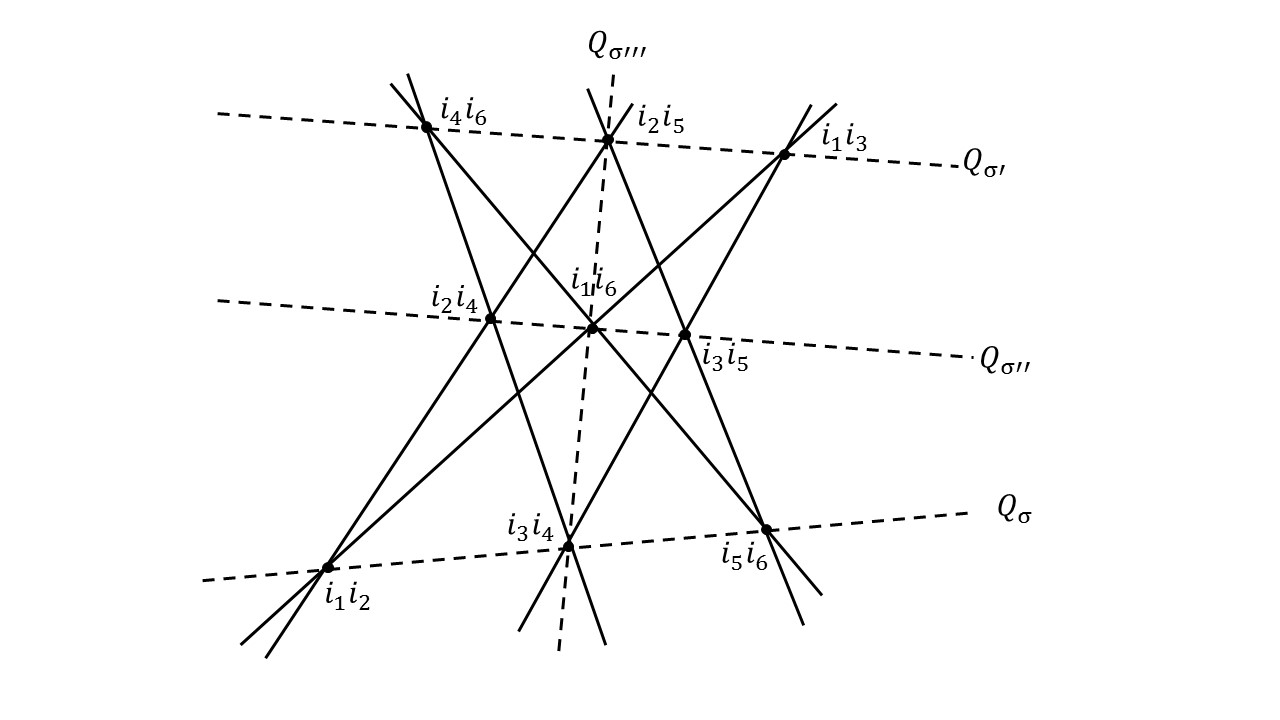}
 \caption{Generic arrangement $\A$ in $\RR^3$ containing $6$ lines satisfying $4$ collinearity conditions.}
 \label{pap_config2}
\end{figure}
\begin{figure}[htbp]
 \begin{center}
  \includegraphics[width=40mm,bb =0 50 450 500]{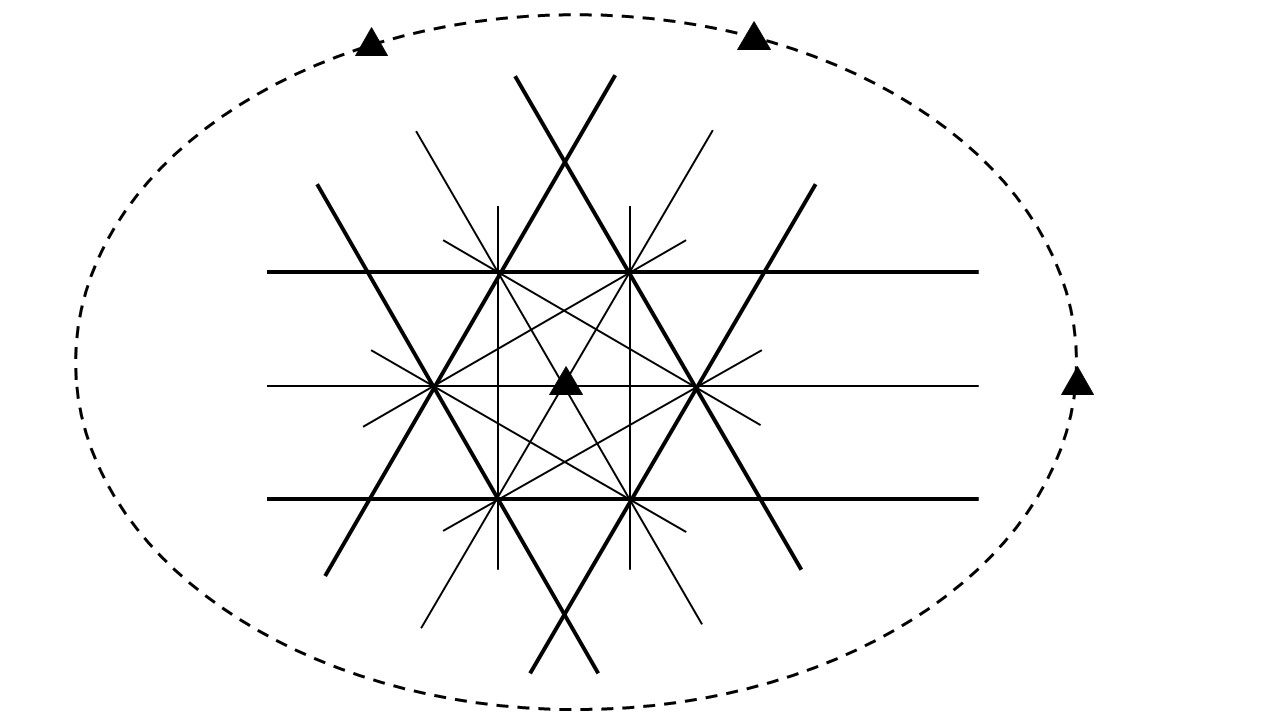}
 \end{center}
 \caption{Codimension $2$ intersections of $15$ hyperplanes in $\B(n,3, \A_\infty)$ indexed in $\{s_1,\ldots, s_6\} \subset [n]$ with $4$ multiplicity $3$ points $\blacktriangle$ corresponding to intersections $\bigcap_{i=1}^{3} D_{\sigma.{L_i}}$, $\bigcap_{i=1}^{3} D_{\sigma'.{L_i}}$, $\bigcap_{i=1}^{3} D_{\sigma''.{L_i}}$ and $\bigcap_{i=1}^{3} D_{\sigma'''.{L_i}}$, $\sigma, \sigma', \sigma'', \sigma'''$ as in Figure \ref{pap_config2}.}
 \label{codim2_2}
\end{figure}
\end{rem}



\begin{thebibliography}{9}
  \bibitem{athana}  C. A. Athanasiadis, The Largest Intersection Lattice of a Discriminantal Arrangement, Beitr\"{a}ge  Algebra Geom., 40 (1999), no. 2, 283-289.
   \bibitem{BB}M. Bayer and K.Brandt, Discriminantal arrangements, fiber polytopes and formality, J. Algebraic Combin. 6 (1997),  229-246.
 \bibitem{EGH}D. Eisenbud, M. Green and J. Harris, Cayley-Bacharach Theorems and Conjectures, Bullettin of AMS 33, no. 3, (1996), 295-324.
  \bibitem{falk} M. Falk, A note on discriminantal arrangements, Proc. Amer. Math. Soc., 122 (1994), no.4, 1221--1227.
  \bibitem{Geb} J. R. Gebert, Perspectives on Projective Geometry, Springer-Verlag, 2011.
  \bibitem{harris} Joe Harris, Algebraic Geometry: A First Course, Springer-Verlag, 1992.
  \bibitem{sette} A. Libgober and S. Settepanella, Strata of discriminantal arrangements, arXiv:1601.06475.
  \bibitem{man} Yu. I. Manin and V. V. Schechtman, Arrangements of Hyperplanes, Higher Braid Groups and Higher Bruhat Orders, Advanced Studies in Pure Mathematics 17, 1989 Algebraic Number Theory in honor K. Iwasawa,  289-308.
\bibitem{OT} P. Orlik and H. Terao, Arrangements of hyperplanes, Grundlehren der Mathematischen Wissenschaften [Fundamental Principles of Mathematical Sciences]." 300, Springer-Verlag, Berlin, (1992).
 \bibitem{SoSuSi} S. Sawada, S. Settepanella and S. Yamagata, Discriminantal arrangement, $3 \times 3$ minors of Pl\"{u}cker matrix and hypersurfaces in Grassmannian $Gr(3,n)$, Comptes Rendus Mathematique Volume 355, Issue 11(2017),  1111-1200.




\end{thebibliography}
\end{document}